\documentclass{amsart}
\usepackage{graphicx}
\usepackage{amssymb,amscd,amsthm,amsxtra}
\usepackage{latexsym}
\usepackage{epsfig}
\usepackage{mathtools}
\usepackage{esint}
\usepackage{color}

\vfuzz2pt 
\hfuzz2pt 
\newtheorem{thm}{Theorem}[section]
\newtheorem{cor}[thm]{Corollary}
\newtheorem{lem}[thm]{Lemma}
\newtheorem{prop}[thm]{Proposition}
\theoremstyle{definition}
\newtheorem{defn}[thm]{Definition}
\theoremstyle{remark}

\numberwithin{equation}{section}
\newcommand{\be}{\begin{equation}}
\newcommand{\ee}{\end{equation}}

\newcommand{\R}{\mathbb R}

\newcommand{\eps}{\epsilon}

\newcommand{\p}{\partial}

\newcommand{\comment}[1]{}

\begin{document}

\title[Uniform density estimates]{Compactness estimates for minimizers of the Alt-Phillips functional of negative exponents}
\author{D. De Silva}
\address{Department of Mathematics, Barnard College, Columbia University, New York, NY 10027}
\email{\tt  desilva@math.columbia.edu}
\author{O. Savin}
\address{Department of Mathematics, Columbia University, New York, NY 10027}\email{\tt  savin@math.columbia.edu}
\begin{abstract} We investigate the rigidity of global minimizers $u \ge 0$ of the Alt-Phillips functional involving negative power potentials
$$\int_\Omega \left(|\nabla u|^2 + u^{-\gamma} \chi_{\{u>0\}}\right) \, dx, \quad \quad \gamma \in (0,2),$$
when the exponent $\gamma$ is close to the extremes of the admissible values. 

In particular we show that global minimizers in $\R^n$ are one-dimensional if $\gamma$ is close to 2 and $n \le 7$, or if $\gamma$ is close to $0$ and $n \le 4$. 

 \end{abstract}

\maketitle

 {\centering\footnotesize \textit{To David, a teacher and a friend.}\par}

\section{Introduction}

In this work we investigate minimizers of an energy functional of the type
$$ J(u, \Omega) = \int_\Omega |\nabla u|^2 + W(u) \, dx, $$
for a special class of homogenous potentials $W: \mathbb{R} \to [0, \infty)$. We are interested in the classification of global minimizers in low dimensions, a question which is intimately connected to the regularity of the free boundaries associated to these minimizers. 

This problem has been extensively studied in the literature for some particular potentials $W$. An important example is the double-well potential $$W(t)=(1-t^2)^2,$$ and the corresponding Allen-Cahn equation which appears in the theory of phase-transitions and minimal surfaces, see \cite{AlC,CH,MM, S}. 

On the other hand, free boundary problems occur when the potential $W$ is not of class $C^2$ near one of its minimum points, and minimizers can develop constant patches. Two such potentials were investigated in greater detail. The first one is the Lipschitz potential $$W(t)=t^+,$$ which corresponds to the classical obstacle problem, and we refer the reader to the book of Petrosyan, Shahgholian and Uraltseva \cite{PSU} for an introduction to this subject.
The second one is the discontinuous potential $$W(t)=\chi_{\{t>0\}},$$ with its associated Alt-Caffarelli energy, which is known as the Bernoulli free boundary problem or the two-phase problem (see \cite{AC, ACF}). We refer to the book of Caffarelli and Salsa \cite{CS} for an account of the basic free boundary theory in this setting.    
 These two examples are part of a continuous family of Alt-Phillips potentials 
 $$ W(t)= (t^+)^\beta, \quad \quad \beta \in [0,2).$$
Nonnegative minimizers $u \ge 0$ of $J$ for these power potentials, together with their free boundaries $$F(u):=\p\{u>0\},$$ were studied by Alt and Phillips in \cite{AP}. 

Recently in \cite{DS2, DS3}, we investigated properties of non-negative minimizers and their free boundaries for Alt-Phillips potentials of negative powers
$$W(t)=t^{-\gamma} \chi_{\{t>0\}}, \quad \quad \gamma \in (0,2).$$
These potentials are relevant in liquid models with large cohesive internal forces in regions of low density. 
The upper bound $\gamma <2$ is necessary for the finiteness of the energy. As $\gamma \to 2$, the energy concentrates more and more near the free boundary, and heuristically, the free boundary should minimize the surface area in the limit.

In \cite{DS2, DS3} we showed that minimizers $u \ge 0$ of the Alt-Phillips functional involving negative power potentials
 \begin{equation}\label{Efintro}
\mathcal E_\gamma(u,\Omega):=\int_\Omega \left(|\nabla u|^2 + u^{-\gamma} \chi_{\{u>0\}}\right) \, dx, \quad \quad \gamma \in (0,2), \quad \Omega \subset \R^n,
 \end{equation}
have optimal $C^\alpha$ H\"older continuity. The behaviour of $u$ near the free boundary $$F(u):=\p \{u>0\}$$
is characterized by an expansion of the type
$$ u= c_\alpha d^ \alpha + o(d^{2-\alpha}), \quad \quad \alpha:=\frac{2}{2+\gamma}\quad  \in (\frac 12, 1),$$ 
where $d$ denotes the distance to $F(u)$ and $c_\alpha d^\alpha$ represents the explicit 1D homogenous solution. Furthermore, using a monotonicity formula and dimension reduction, we showed that $F(u)$ is a hypersurface of class $C^{1,\delta}$ 
up to a closed singular set of dimension at most $n- k(\gamma)$, where $k(\gamma) \ge 3$ is the first dimension in which a non-trivial $\alpha$-homogenous minimizer exists. We also established the Gamma-convergence of a suitable multiple of the $\mathcal E_\gamma$ to the perimeter of the positivity set $Per_\Omega(\{u>0\})$ as $\gamma \to 2$.

The classification of $\alpha$-homogenous minimizers in low dimensions, i.e. finding a non-trivial lower bound for $k(\gamma)$, seems to be a difficult question. In this paper we establish such a bound in the case when $\gamma$ is sufficiently close to $2$ or to $0$. This is achieved by compactness, and the fact that the limiting problems are better understood. In particular, if $\gamma$ is close to 2 then we inherit the properties of minimal surfaces and find $k(\gamma) \ge 8$, and when $\gamma$ is close to $0$ then we inherit the properties of the Alt-Caffarelli minimizers and obtain $k(\gamma) \ge 5$. For the regularity theory of minimal surfaces, we refer the reader to Giusti's book \cite{G}. The regularity of minimizers of the Alt-Caffarelli functional:
$$\mathcal E_0 (u):= \int_\Omega (|\nabla u|^2+ \chi_{\{u>0\}})\; dx,$$ was established in dimension $n=2$ by Alt and Caffarelli \cite{AC}, in dimension $n=3$ by Caffarelli, Jerison, and Kenig \cite{CJK}, and finally in dimension $n=4$ by Jerison and Savin \cite{JS}. In view of the singular minimizing solution exhibited by De Silva and Jerison in \cite{DJ}, regularity fails in dimension $n\geq 7,$ hence $5 \leq k(0) \leq 7.$

As a consequence we have the following full regularity result for $F(u)$. 
\begin{thm}\label{TIN}
Let $u \ge 0$ be a minimizer of \eqref{Efintro}. Then $F(u)$ is of class $C^{1,\delta}$ if 
$$\mbox{either $n \le 7$ and $\gamma \in (2 - \eta,2)$, or $n \le 4$ and $\gamma \in (0, \eta)$,}$$ where $\delta,\eta>0$ are small constants.
\end{thm}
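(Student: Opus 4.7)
The plan is to leverage the partial regularity of \cite{DS2, DS3} together with a compactness-contradiction argument, reducing the problem to the classification of homogeneous minimizers for the two well-understood limit problems. Recall from \cite{DS2, DS3} that $F(u)$ is $C^{1,\delta}$ outside a singular set of dimension at most $n - k(\gamma)$, where $k(\gamma)$ is the first dimension in which a non-trivial $\alpha$-homogeneous minimizer of $\mathcal{E}_\gamma$ exists, with $\alpha = 2/(2+\gamma)$. The theorem is therefore equivalent to the bounds $k(\gamma) \ge 8$ for $\gamma \in (2-\eta,2)$ and $k(\gamma) \ge 5$ for $\gamma \in (0,\eta)$. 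I would argue both by contradiction: suppose $\gamma_j \to 2$ (resp.\ $\gamma_j \to 0$) and $u_j \ge 0$ is a non-trivial $\alpha_j$-homogeneous minimizer of $\mathcal{E}_{\gamma_j}$ in some fixed $\R^n$ with $n \le 7$ (resp.\ $n \le 4$), normalized so that $0 \in F(u_j)$.

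The first step is to extract a limit. The uniform H\"older bound $u_j(x) \le C|x|^{\alpha_j}$ and the uniform non-degeneracy from \cite{DS2, DS3}, combined with the homogeneity, yield local equicontinuity of $\{u_j\}$; up to a subsequence $u_j \to u_\infty$ locally uniformly and the positivity sets $\{u_j > 0\}$ converge in $L^1_{loc}$ to a cone $E = \{u_\infty > 0\}$. The second step is to identify the limit problem. When $\gamma_j \to 2$, the $\Gamma$-convergence of a suitable renormalization of $\mathcal{E}_{\gamma_j}$ to $Per_\Omega(\{\cdot>0\})$, established in \cite{DS2, DS3}, forces $E$ to be a perimeter-minimizing cone in $\R^n$; since $n \le 7$, the Bernstein-De Giorgi-Simons theorem \cite{G} implies $E$ is a half-space. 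When $\gamma_j \to 0$, since $\alpha_j \to 1$ and $t^{-\gamma_j} \to 1$ pointwise for $t>0$, a direct variational comparison coupled with the uniform non-degeneracy identifies $u_\infty$ as a $1$-homogeneous minimizer of the Alt-Caffarelli functional $\mathcal{E}_0$; the classification of \cite{AC, CJK, JS} then forces $u_\infty$ to be one-dimensional for $n \le 4$. In either case $F(u_\infty)$ is a hyperplane.

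It remains to transfer the flatness of $F(u_\infty)$ back to the $u_j$: for $j$ large, $F(u_j)$ is $\eps$-flat in a fixed neighborhood of $0$, so the flatness-implies-$C^{1,\delta}$ result from \cite{DS2, DS3} applies; matching the expansion $u_j = c_{\alpha_j} d^{\alpha_j} + o(d^{2-\alpha_j})$ of \cite{DS2, DS3} with the $\alpha_j$-homogeneity then promotes local smoothness at $0$ to global rigidity, forcing $u_j$ to coincide (after rotation) with the explicit one-dimensional profile $c_{\alpha_j}(x_n)_+^{\alpha_j}$, contradicting the non-triviality of $u_j$.

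The main obstacle I expect is making the compactness and non-degeneracy estimates uniform as $\gamma_j$ tends to the endpoints. In the $\gamma \to 2$ regime the potential $t^{-\gamma}$ is increasingly singular and the energy concentrates on $F(u_j)$, so the $\Gamma$-convergence must be coupled with density and non-degeneracy estimates that remain stable up to $\gamma=2$, and one must verify that the limit cone $E$ is non-trivial (otherwise a collapse of $F(u_j)$ could occur). In the $\gamma \to 0$ regime, although the functionals converge pointwise, the normalization of $u_j$ must be controlled so that $u_\infty$ is a genuine non-degenerate Alt-Caffarelli minimizer rather than a multiple of zero or infinity. Once these uniform endpoint estimates are secured, the compactness-contradiction scheme above closes.
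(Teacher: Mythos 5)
Your overall blueprint—compactness to the limit problems, classification there, transfer of flatness back—is the right one and mirrors the paper's strategy, and your treatment of the $\gamma \to 0$ regime is essentially complete (compactness to Alt-Caffarelli, cone classification from \cite{AC,CJK,JS}). But there is a genuine and central gap in the $\gamma \to 2$ regime, in the final step where you write ``so the flatness-implies-$C^{1,\delta}$ result from \cite{DS2, DS3} applies.''

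The flatness-implies-regularity results available from \cite{DS1,DS2,DS3} have a flatness threshold $\eps_0(\gamma)$ and improvement constants that \emph{degenerate} as $\gamma \to 2^-$. Concretely, the underlying Harnack-type inequality (Lemma 7.4 of \cite{DS1}) and the improvement-of-flatness step (Proposition 6.1 of \cite{DS1}) are proved with barriers such as $-|x'|^2 + \tfrac{1}{1+s}x_n^2 \pm \eps x_n^{1-s}$ with $s = 2(\alpha-1) \to -1$, and these blow up. So once the compactness step gives you that $F(u_j)$ is $\eps$-flat for large $j$, you cannot feed this into the existing regularity machinery: the required threshold for applicability shrinks to zero faster than the flatness you have obtained. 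This is precisely the phenomenon the paper flags in the introduction (``the estimates for the minimizers do not remain uniform as $\gamma \to 2$'') and it is the reason the paper devotes Sections 3 and 4 to a \emph{uniform} (in $\gamma$) improvement of flatness, Theorem \ref{TI}. That proof requires identifying and analyzing the degenerate linearized Neumann problem as $s \to -1$, proving $C^{1,\alpha}$ estimates for it with constants independent of $s$ (Proposition \ref{UE}), replacing the degenerating barriers with ones like $\tfrac{1}{1+s}(x_n^2 - x_n^{1-s})$ which converge to $x_n^2 \log x_n$, and a Lemma \ref{H} giving a $\gamma$-independent Harnack gain in the interior. None of this appears in your sketch; you name the obstacle (uniformity of estimates) but do not supply the new argument that overcomes it, and the cited results do not do so.

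Two smaller remarks. First, for $\gamma \to 2$ the $\Gamma$-limit of the renormalized $J_\gamma$ is the Dirichlet-perimeter functional $\mathcal F(u,E)=\int|\nabla u|^2 + P_\Omega(E)$ of \cite{ACKS}, not just the perimeter of the positivity set; one then needs the monotonicity formula (Proposition \ref{WMF}) to see that a cone minimizing $\mathcal F$ has $u \equiv 0$ with $E$ a minimal cone, so that the Simons classification applies. Second, the paper establishes Theorem \ref{TIN} via the full rigidity of global minimizers (Theorem \ref{TM}), which requires a Lemma \ref{LF}-type argument upgrading the qualitative flatness from Proposition \ref{P2} to a quantitative $u \in \mathcal S(r, C\eps)$ at every scale, again with $\gamma$-independent constants, before the uniform improvement of flatness can be iterated. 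Your sketch collapses these into a single sentence and would need to be fleshed out accordingly.
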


The proof of Theorem \ref{TIN} is straightforward when $\gamma$ is close to $0$, however it is much more involved when $\gamma$ is close to 2. The reason is that the estimates for the minimizers do not remain uniform as $\gamma \to 2$, even though the one-dimensional model solutions $c_\alpha [(x_n)^+]^\alpha$ do converge to a multiple of $[(x_n)^+]^{1/2}$. This can be seen from a simple example where we solve the problem in the exterior domain $\Omega=\R^n \setminus B_1$ with boundary data $u=1$ on $\p B_1$. Then the minimizer is radial, and it turns out that
$$F(u)=\p B_{1+\mu} \quad \mbox{ with $\mu \to 0$ as $\gamma \to 2$.}$$

In \cite{DS3}, we developed uniform estimates in $\gamma$ as $\gamma \to 2$, but to achieve this we had to rescale the potential term in the functional $\mathcal E_\gamma$ in a suitable way (see \eqref{Jf} in Section 2). We further established the Gamma-convergence of these rescaled functionals to the Dirichlet-perimeter functional 
 $$\mathcal F(u):=\int_\Omega |\nabla u|^2 dx + Per_{\Omega}(\{ u=0\}),$$
which was studied by Athanasopoulous, Caffarelli, Kenig, Salsa in \cite{ACKS}. The results in \cite{DS3} imply the flatness of the free boundary for global minimizers of $\mathcal E_\gamma$ up to dimension $n=7$, if $\gamma$ is close to $2$.

In this paper, we achieve the desired classification of global minimizers after establishing uniform improvement of flatness estimates, which we make precise below (see Theorem \ref{TI} in Section 2). 

Uniform estimates in other contexts were obtained for example by Caffarelli and Valdinoci for the $s$-minimal surface equation as $s \to 1$ \cite{CV}, and by Caffarelli and Silvestre for integro-differential equations as the order $\sigma \to 2$ \cite{CSi2}. 

The paper is organized as follows. In Section 2, we set notation, recall previous results, and state our main theorems. In the following section, Section 3, we provide uniform estimates for solutions to the linearized problem associated to our free boundary problem. Section 4 is devoted to the proof of the uniform improvement of flatness, which is the key tool in our strategy. In Section 5, we characterize global minimizers of $\mathcal{F}$ in low dimensions and deduce the flatness property of global minimizers of $\mathcal E_\gamma$, for $\gamma \to 2$, yielding the proof of Theorem \ref{TM} on the basis of the uniform improvement of flatness. We also prove compactness for $\gamma \to 0$, completing the proof of Theorem \ref{TM}.

\section{Main results}

We collect here our main results. We start by introducing some notation and recalling previous results.

Let $\Omega$ be a bounded domain in $\R^n$ with Lipschitz boundary. We consider the functional
 \begin{equation}\label{Ef}
\mathcal E_\gamma(u):=\int_\Omega \left(|\nabla u|^2 + u^{-\gamma} \chi_{\{u>0\}}\right) \, dx, \quad \quad \gamma \in (0,2),
 \end{equation}
 which acts on functions
$$u : \Omega \to \R, \quad \quad u \in H^1(\Omega),  \quad u \ge 0.$$
The Euler-Lagrange equation associated to the minimization problem reads
$$\Delta u = -\frac{\gamma}{2} u^{-\gamma-1},$$
and we let $u_0$ denote its one-dimensional solution
\begin{equation}\label{a}
u_0(t):= c_\alpha (t^+)^\alpha, \quad \quad  \quad \alpha = \frac{2}{2+\gamma}, \quad c_\alpha:= \left(\frac{\gamma+2}{2} \right)^\frac{2}{\gamma+2}.
\end{equation}
More precisely, we proved in \cite{DS2} that minimizers of $\mathcal E_\gamma$ are viscosity solutions to the following degenerate one-phase free boundary problem:
\begin{equation}\label{v}\begin{cases}\Delta u = -\frac \gamma 2u^{-(\gamma+1)} \quad \text{in $\{u>0\} \cap \Omega,$}\\
u(x_0 + t\nu)= c_\alpha t^{\alpha} + o(t^{2-\alpha}) \quad \text{on $F(u):=\p \{u>0\} \cap \Omega,$}
\end{cases}\end{equation}
with $t \geq 0$, $\nu$ the unit normal to $F(u)$ at $x_0$ pointing towards $\{u>0\}$, and $\alpha, c_\alpha, \gamma$ as above.

We recall the notion of viscosity solution to \eqref{v}. As usual, we say that a continuous function $u$ touches a continuous function $\phi$ by above (resp. below) at a point $x_0$ if 
$$u \geq \phi \ \text{(resp. $u\leq \phi $)}\quad \text{in a neighborhood of $x_0$}, \quad u(x_0)=\phi(x_0).$$ Typically, if the inequality is strict (except at $x_0$), we say that $u$ touches $\phi$ strictly by above (resp. below). In our context, with $\phi \geq 0$, when we say that $u$ touches $\phi$ strictly by above at $x_0$, we mean that $u \geq \phi$ in a neighborhood $B$ of $x_0$ and $u>\phi$ (except at $x_0$) in $B \cap \overline{\{\phi>0\}}$ (and similarly by below we require the inequality to be strict in a neighborhood of $x_0$ intersected $\overline{\{u>0\}}$).

We now consider the class $\mathcal C^+$ of continuous functions $\phi$ vanishing on the boundary of a ball $B:=B_R(z_0)$ and positive in $B$, such that $\phi (x)= \phi( |x-z_0|)$ in $B$ and $\phi$ is extended to be zero outside $B$. We denote by $d(x):= dist(x, \p B)$ for $x$ in $B$ and $0$ otherwise. 
Similarly we can define the class $\mathcal C^-$, with $\phi$ being zero in the ball and positive outside, and $d(x):= dist(x, \p B)$ for $x \in B^c$ and $0$ otherwise.

\begin{defn}\label{defn} We say that a non-negative continuous function $u$ satisfies \eqref{v} in the viscosity sense, if 

1) in the set where $u>0$, $u$ is $C^\infty$ and satisfies the equation in a classical sense; 

2) if $x_0 \in F(u):= \p \{u>0\} \cap \Omega$, then $u$ cannot touch $\psi \in \mathcal C^+$ (resp. $\mathcal C^-$) by above (resp. below) at $x_0$, with $$\psi(x):= c_\alpha d(x)^\alpha + \mu \, d(x)^{2-\alpha},$$ $\alpha, c_\alpha$ as in \eqref{a} and $\mu>0$ (resp $\mu<0$).
\end{defn}

Now, let $J_\gamma$ be a  rescaling of $\mathcal E_\gamma$ defined as
\begin{equation}\label{Jf}
J_\gamma (u,\Omega):=\int _\Omega |\nabla u|^2 + W_\gamma(u) \, \, dx, 
\end{equation}
where
\begin{equation}\label{1Dp00}
W_\gamma (u):= c_\gamma u^{-\gamma} \chi_{\{u>0\}}, \quad \quad  \mbox{with} \quad  c_\gamma:= \frac {1}{16} \cdot (2-\gamma)^{2}, \quad \gamma \in (0,2),
\end{equation} 
and let us introduce the Dirichlet-perimeter functional $\mathcal F$ investigated by Athanasopoulous, Caffarelli, Kenig, Salsa in \cite{ACKS}. It acts on the space of admissible pairs $(u,E)$ consisting of functions $u \ge 0$ and measurable sets $E \subset \Omega$ which have the property that $u=0$ a.e. on $E$,
$$ \mathcal A(\Omega):=\{(u,E)| \quad u \in H^1(\Omega),\quad \mbox{$E$ Caccioppoli set, $u \ge 0$ in $\Omega$, $u=0$ a.e. in $E$} \}.$$
The functional $\mathcal F$ is given by the Dirichlet-perimeter energy
$$\mathcal F_\Omega(u,E)= \int_\Omega |\nabla u|^2 dx + P_\Omega(E),$$
 where $P_{\Omega}(E)$ represents the perimeter of $E$ in $\Omega$
 \begin{align*}
 P_\Omega(E)&=\int_\Omega |\nabla \chi_E| \\
 &=\sup \,  \int_\Omega \chi_E \, div \, g \, dx \quad \mbox{with} \quad g \in C_0^\infty(\Omega), \quad |g| \le 1.  
 \end{align*}

In \cite{DS3} we established the Gamma-convergence of $J_\gamma$ to $\mathcal F$, as $\gamma \to 2$. Precisely, we proved the following theorem.

 \begin{thm}\label{TMDS3}
 Let $\Omega$ be a bounded domain with Lipschitz boundary, $\gamma_k \to 2^-$, and $u_k$ a sequence of functions with uniform bounded energies
 $$ \|u_k\|_{L^2(\Omega)} + J_{\gamma_k}(u_k,\Omega) \le M,$$
 for some $M>0$. Then, after passing to a subsequence, we can find $(u,E) \in \mathcal A(\Omega) $ such that
   $$ u_k \to u \quad \mbox{in $L^2(\Omega)$}, \quad \chi_{\{u_k>0\}} \to \chi_{E^c} \quad \mbox{in $L^1(\Omega)$}.$$
   
  Moreover, if $u_k$ are minimizers of $J_{\gamma_k}$ then the limit $(u,E)$ is a minimizer of $\mathcal F$. The convergence of $u_k$ to $u$ and respectively of the free boundaries $\p \{u_k>0\}$ to $\p E$ is uniform on compact sets  (in the Hausdorff distance sense). 
 \end{thm}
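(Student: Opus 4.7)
My plan is a compactness and $\Gamma$-convergence argument of Modica--Mortola type, where the normalization $c_{\gamma_k}=(2-\gamma_k)^2/16$ is calibrated precisely so that the one-dimensional transition carries unit surface tension in the limit $\gamma_k\to 2$. From the Dirichlet part of $J_{\gamma_k}$ I extract $\|\nabla u_k\|_{L^2(\Omega)}^2\le M$, and combined with $\|u_k\|_{L^2}\le M$, Rellich yields a subsequence with $u_k\to u$ in $L^2(\Omega)$, a.e., and weakly in $H^1$, $u\ge 0$. The key device for the potential is the substitution $v_k:=u_k^{(2-\gamma_k)/2}$. Since $\sqrt{c_{\gamma_k}}=(2-\gamma_k)/4$ and $\nabla v_k=\tfrac{2-\gamma_k}{2}u_k^{-\gamma_k/2}\nabla u_k$ on $\{u_k>0\}$, AM--GM gives
\[
|\nabla u_k|^2+c_{\gamma_k}u_k^{-\gamma_k}\chi_{\{u_k>0\}}\ \ge\ 2\sqrt{c_{\gamma_k}}\,u_k^{-\gamma_k/2}|\nabla u_k|\chi_{\{u_k>0\}}\ =\ |\nabla v_k|,
\]
so $\int_\Omega|\nabla v_k|\,dx\le J_{\gamma_k}(u_k,\Omega)\le M$. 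Combined with an $L^\infty$ control of $u_k$ (obtained by truncation, or from the a priori estimates of \cite{DS3}), $v_k$ is uniformly bounded in $BV(\Omega)$ and converges along a further subsequence to some $w\in BV(\Omega)$ in $L^1$ and a.e.

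To identify $w$ and promote this to the $L^1$ convergence of $\chi_{\{u_k>0\}}$, I use the Chebyshev-type bound
\[
|\{0<u_k\le s\}|\ \le\ \frac{s^{\gamma_k}}{c_{\gamma_k}}\,J_{\gamma_k}(u_k,\Omega)\ \le\ \frac{16Ms^{\gamma_k}}{(2-\gamma_k)^2}.
\]
Taking $s=(1-\eta)^{2/(2-\gamma_k)}$ for $\eta\in(0,1)$ fixed, the right-hand side is $16M(1-\eta)^{2\gamma_k/(2-\gamma_k)}/(2-\gamma_k)^2$, which vanishes since $(1-\eta)^{2\gamma_k/(2-\gamma_k)}$ decays faster than any polynomial in $(2-\gamma_k)$. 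Hence $|\{0<v_k\le 1-\eta\}|\to 0$; and on the complementary set $\{v_k>1-\eta\}\cap\{u_k>0\}$, the $L^\infty$ bound gives $v_k\le 1+o(1)$, so $|v_k-1|\le\eta+o(1)$. Splitting,
\[
\int_\Omega|v_k-\chi_{\{u_k>0\}}|\,dx\ =\ \int_{\{u_k>0\}}|u_k^{(2-\gamma_k)/2}-1|\,dx\ \le\ \eta|\Omega|+o(1)+|\{0<v_k\le 1-\eta\}|,
\]
and sending first $k\to\infty$ then $\eta\to 0$ shows $v_k-\chi_{\{u_k>0\}}\to 0$ in $L^1$. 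In particular $\chi_{\{u_k>0\}}\to w$ in $L^1$; being an $L^1$ limit of characteristic functions, $w=\chi_F$ for some measurable $F$, which is Caccioppoli by the $BV$ bound. On $\{u>0\}$, $u_k\to u>0$ and $(2-\gamma_k)/2\to 0$ force $v_k\to 1$ a.e., so $F\supseteq\{u>0\}$; setting $E:=\Omega\setminus F$ yields $(u,E)\in\mathcal A(\Omega)$.

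When the $u_k$ are minimizers, I establish the two $\Gamma$-inequalities. For the liminf, choose $\eta_\delta\in C^\infty_c(\Omega)$ equal to $1$ on $\{u>\delta\}$ and supported away from $\partial E$; writing $|\nabla u_k|^2\ge\eta_\delta|\nabla u_k|^2$ and applying AM--GM only to the remainder, weak $L^2$ lower semicontinuity of the first piece and Reshetnyak-type weighted $BV$ lower semicontinuity for the second give
\[
\liminf_k J_{\gamma_k}(u_k,\Omega)\ \ge\ \int_\Omega\eta_\delta|\nabla u|^2\,dx+\int_\Omega(1-\eta_\delta)\,|\nabla w|,
\]
and $\delta\to 0$ recovers $\mathcal F_\Omega(u,E)=\int|\nabla u|^2+P_\Omega(E)$. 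For the limsup, given any competitor $(u^*,E^*)\in\mathcal A$, I construct a recovery sequence $u_k^*$ equal to $u^*$ outside a tubular neighborhood of $\partial E^*$ of width $\delta_k\to 0$, and equal inside to a truncation of the 1D profile $c_{\alpha_k}d^{\alpha_k}$ (the 1D minimizer of $\mathcal E_{\gamma_k}$, properly rescaled for $J_{\gamma_k}$) applied to the signed distance to $\partial E^*$; a direct energy computation, using the calibration $c_\gamma=(2-\gamma)^2/16$, shows that the boundary-layer contribution tends to $P_\Omega(E^*)$. Minimality and $(u,E)$ being the weak limit of the $u_k$ then produce $\mathcal F_\Omega(u,E)\le\mathcal F_\Omega(u^*,E^*)$. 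Finally, the locally uniform convergence $u_k\to u$ and Hausdorff convergence $\partial\{u_k>0\}\to\partial E$ on compact subsets follow from the uniform-in-$k$ $C^{\alpha_k}$ Hölder and non-degeneracy estimates for minimizers established in \cite{DS3}. The main technical obstacle is the recovery-sequence construction: both the profile $c_{\alpha_k}t^{\alpha_k}$ and its natural transition thickness degenerate as $\gamma_k\to 2$, and one must show that the boundary-layer energy produces \emph{exactly} unit surface tension rather than any spurious multiplicative defect, and that the matching to the bulk $u^*$ across the shrinking tube introduces no extra cost in the limit.
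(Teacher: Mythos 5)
This theorem is not proved in the present paper: it is quoted verbatim from reference \cite{DS3}, where the authors established the $\Gamma$-convergence of $J_\gamma$ to $\mathcal F$. So there is no in-paper proof to compare against, and your proposal has to be assessed on its own merits.

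Your overall strategy --- a Modica--Mortola type argument built on the substitution $v_k=u_k^{(2-\gamma_k)/2}$ and the AM--GM inequality $|\nabla u_k|^2+c_{\gamma_k}u_k^{-\gamma_k}\ge |\nabla v_k|$ --- is the natural one for this calibration and is almost certainly the route taken in \cite{DS3}. Your computation of the calibration is correct: the $1$D transition energy from level $0$ to level $h$ for $J_\gamma$ equals $2\sqrt{c_\gamma}\,h^{1-\gamma/2}/(1-\gamma/2)=h^{1-\gamma/2}\to 1$, so $c_\gamma=(2-\gamma)^2/16$ produces exactly unit surface tension. The compactness part (extraction of $u$, the $BV$ bound on $v_k$, the Chebyshev estimate showing $|\{0<v_k\le 1-\eta\}|\to 0$, and the identification $w=\chi_{E^c}$) is essentially sound; the invoked $L^\infty$ bound is not actually available for non-minimizers satisfying only the energy bound, but it is also not needed --- the $L^2$ bound of $u_k$ together with a Chebyshev estimate on $\{v_k>1+\eta\}$ already gives $\int_{\{v_k>1+\eta\}}|v_k-1|\to 0$, so you should drop that appeal.

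There are two genuine gaps. First, the $\Gamma$-liminf cutoff argument is not clean as stated: you want $\eta_\delta$ to equal $1$ on $\{u>\delta\}$ while vanishing on $\partial E$, but for a general admissible pair $\{u>\delta\}$ need not stay away from $\partial E$, and the lower semicontinuity $\liminf\int(1-\eta_\delta)|\nabla v_k|\ge\int(1-\eta_\delta)\,|D\chi_{E^c}|$ only recovers $P_\Omega(E)$ as $\delta\to 0$ if $\eta_\delta\to 0$ on $\mathcal H^{n-1}$-almost all of $\partial^* E$; this requires an argument that $u$ has vanishing trace on $\partial^* E$, which you have not supplied. Second, and more seriously, the $\Gamma$-limsup recovery construction is the heart of the matter and you have only named the difficulties rather than resolved them: the degeneration of both the profile $c_{\alpha_k}t^{\alpha_k}$ and the transition width as $\gamma_k\to 2$, the matching to the bulk $u^*$, and the fact that $\partial E^*$ need not be smooth all have to be handled, and without that the inequality $\mathcal F_\Omega(u,E)\le\mathcal F_\Omega(u^*,E^*)$ does not follow. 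Finally, citing \cite{DS3} for the uniform density and non-degeneracy estimates that upgrade $L^1$-convergence of $\chi_{\{u_k>0\}}$ to locally uniform Hausdorff convergence of the free boundaries is circular here, since that is precisely the reference whose theorem you are re-proving; a self-contained argument would need to establish those density estimates uniformly in $\gamma_k$.
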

 
Here, we exploit this fact to obtain our main theorem in the subtle case when $\gamma \to 2$. The following is our main result, from which Theorem \ref{TIN} follows as discussed in the Introduction.

\begin{thm}\label{TM} Let $u$ be a global minimizer of $\mathcal E_\gamma$, and 
assume that
$$\mbox{either $n \le 7$ and $\gamma \in (2 - \eta,2)$, or $n \le 4$ and $\gamma \in (0, \eta)$,}$$ for some $\eta=\eta(n)>0$ small. Then up to rotations 
$u=u_0(x_n)$. 
\end{thm}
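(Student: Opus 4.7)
The overall strategy is a compactness/contradiction argument. Suppose the theorem fails in, say, the regime $\gamma \to 2$ with $n \le 7$: then there exist a sequence $\gamma_k \to 2^-$ and associated global minimizers $u_k$ of $\mathcal E_{\gamma_k}$ in $\R^n$ which are \emph{not} one-dimensional. I would pass $u_k$ (after the rescaling that produces $J_{\gamma_k}$) to a limit object, invoke a classification theorem for the limiting problem to show the limit is one-dimensional, and then use a uniform improvement-of-flatness estimate to transport flatness from the limit back to $u_k$ for $k$ large, yielding a contradiction.

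For the case $\gamma_k \to 0$ with $n \le 4$, the limiting problem is the Alt–Caffarelli functional $\mathcal E_0$, and estimates for $u_k$ remain uniform as $\gamma_k \to 0$. Standard Hausdorff compactness of free boundaries produces a global minimizer $u_\infty$ of $\mathcal E_0$ in $\R^n$. The results of \cite{AC, CJK, JS} imply that for $n \le 4$ every such minimizer is a half-plane solution, so $u_\infty$ is one-dimensional. Standard improvement-of-flatness for $\mathcal E_\gamma$ at fixed small $\gamma > 0$ (which is uniform for $\gamma$ bounded away from the endpoints) then forces $u_k$ itself to be one-dimensional for large $k$, contradicting the choice of $u_k$.

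For the delicate case $\gamma_k \to 2^-$ with $n \le 7$, I would apply Theorem \ref{TMDS3} to the rescaled minimizers (of $J_{\gamma_k}$) to obtain a pair $(u_\infty,E_\infty) \in \mathcal A(\R^n)$ which is a global minimizer of $\mathcal F$. I then classify such minimizers: any compactly supported smooth perturbation of $E_\infty$ is an admissible competitor (by readjusting $u_\infty$ to vanish on $E_\infty$ and remain harmonic off it), so $\p E_\infty$ is a local perimeter-minimizer; by the De Giorgi–Simons theory (see \cite{G}), $\p E_\infty$ is a hyperplane for $n \le 7$, and the Dirichlet part then forces $u_\infty$ to vanish on one half-space. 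Thus $u_\infty$ is one-dimensional. The final step is to transfer this flatness back to $u_k$: because the classical improvement of flatness for $\mathcal E_\gamma$ degenerates as $\gamma \to 2$, I would invoke the uniform improvement of flatness (Theorem \ref{TI}) — valid for all $\gamma$ sufficiently close to $2$ — to conclude, by the standard iteration, that $u_k = u_0(x\cdot \nu_k)$ for some unit $\nu_k$, once $k$ is large.

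The main obstacle is precisely this uniform improvement of flatness as $\gamma \to 2$. Its proof requires a Caffarelli-type linearization whose limiting equation (governing the normalized defect $(u_k - u_0)/\varepsilon_k$) is degenerate elliptic with weights that depend on $\gamma$ yet admit $C^{1,\delta}$ regularity uniform in $\gamma$, as well as a Harnack-type inequality that remains meaningful even though the penalty constant $c_\gamma = \tfrac{1}{16}(2-\gamma)^2 \to 0$ in \eqref{1Dp00}. The linear theory is carried out in Section 3, the improvement of flatness in Section 4, and the classification of $\mathcal F$-minimizers together with the final compactness argument in Section 5, thereby assembling the proof of Theorem \ref{TM}.
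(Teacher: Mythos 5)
Your overall plan is the same as the paper's: prove uniform improvement of flatness (Theorem \ref{TI}), use $\Gamma$-convergence (Theorem \ref{TMDS3}) to pass to the Dirichlet–perimeter functional $\mathcal F$ as $\gamma\to 2$, classify global $\mathcal F$-minimizers, and for $\gamma\to 0$ pass to the Alt–Caffarelli functional and use \cite{AC,CJK,JS}. However, two of the load-bearing steps in the $\gamma\to 2$ branch have genuine gaps.

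First, the classification of global minimizers of $\mathcal F$. You assert that for a minimizing pair $(u_\infty,E_\infty)$, one can perturb $E_\infty$ to a smooth competitor $E'$, readjust $u_\infty$ to an admissible competitor, and conclude that $\partial E_\infty$ is a \emph{local perimeter minimizer}. This does not follow. If $E'\subset E_\infty$ you may keep $v=u_\infty$ and deduce $P(E_\infty)\le P(E')$; but if $E'\supset E_\infty$, the competitor $v$ must vanish on the larger set $E'$, which \emph{increases} the Dirichlet energy, and the minimality inequality $\int|\nabla u_\infty|^2+P(E_\infty)\le \int|\nabla v|^2+P(E')$ no longer yields $P(E_\infty)\le P(E')$. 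Indeed, in the interior regularity theory for $\mathcal F$ the first-variation condition on $\partial E$ is $H_{\partial E}=|\nabla u|^2$ rather than $H_{\partial E}=0$, so $\partial E$ is \emph{not} a minimal surface in general. The paper instead proves a Weiss-type monotonicity formula for $\mathcal F$ (Proposition \ref{WMF}), uses it to show that minimizing cones have $u\equiv 0$ because a $1/2$-homogeneous function that is also Lipschitz (by \cite{ACKS}) must vanish (Proposition \ref{P11}), invokes Simons' theorem to rule out nontrivial minimal cones for $n\le 7$, and then uses constancy of the monotone quantity to conclude $u\equiv 0$ and $E$ a half-space for any global minimizer (Proposition \ref{P1'}). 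Your argument skips this chain and assumes what needs to be proved.

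Second, the step where you "transfer flatness back to $u_k$" is underspecified. Theorem \ref{TI} requires the strong input $u_k\in\mathcal S(1,\eps)$, i.e.\ that $u_k$ is trapped between translates of the one-dimensional profile $u_0$, not merely that $\partial\{u_k>0\}$ is Hausdorff-close to a hyperplane. The passage from Hausdorff flatness of the free boundary (the paper's Proposition \ref{P2}) to the $\mathcal S(r,C\eps)$ property requires showing that $u$ is trapped between $(1\pm C\eps)u_0(d(x))$; the paper proves this in Lemma \ref{LF} by iterating a quantitative comparison (Lemma \ref{L010}) exploiting that $a\,u_0$ is a strict super/subsolution for $a\gtrless 1$. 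Without this (or an equivalent argument), the improvement-of-flatness iteration cannot be started. In short, the skeleton of your proof matches the paper, but the classification of $\mathcal F$-minimizers and the initiation of the flatness iteration each need the paper's intermediate lemmas (or substitutes), and the argument you offered in their place does not hold.
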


The key ingredient in the proof of Theorem \ref{TM} is a uniform (independent of $\gamma$) ``improvement of flatness" result for viscosity solutions. We start by giving the definition of $\eps$-flatness, $\eps>0$.
\begin{defn} \label{sre} We say that
$$ u \in \mathcal S(r,\eps) $$
if $0 \in \p \{u>0\}$ and for any ball $B_t(y) \subset B_r$ centered on the free boundary $y \in \p \{u >0\}$, $u$ is $\eps$-flat in $B_t(y)$ i.e. there exists a unit direction $\nu$ depending on $t,y,$ such that
$$u_0(x \cdot \nu - \eps t) \le u(y+x) \le u_0(x \cdot \nu + \eps t) \quad \mbox{if $|x| \le t$.}$$
\end{defn}

Our uniform improvement of flatness theorem then reads as follows.

\begin{thm}\label{TI}Let $u$ be a viscosity solution to \eqref{v} in $B_1$.
There exists $\eps_0(n)>0$ such that if $0<\eps \le \eps_0$ then
$$ u \in \mathcal S(1,\eps) \quad \Longrightarrow \quad u \in \mathcal S(\rho, \frac \eps 2 ),$$
for some $\rho(n)>0$.
\end{thm}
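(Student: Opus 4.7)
The strategy is the standard compactness/contradiction approach to improvement of flatness (De Silva's viscosity scheme), with the essential feature that every step---partial Harnack, compactness, identification of the linearized operator, and its boundary regularity---must be made uniform in $\gamma\in(0,2)$.

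Suppose the conclusion fails. Then there exist $\gamma_k\in(0,2)$, $\eps_k\to 0$ and viscosity solutions $u_k$ to \eqref{v} with $\gamma=\gamma_k$ satisfying $u_k\in\mathcal S(1,\eps_k)$ but $u_k\notin\mathcal S(\rho,\eps_k/2)$, for any prescribed $\rho=\rho(n)>0$ to be fixed at the end. After a rotation the flat direction at the origin is $e_n$, so
$$u_0^{(k)}(x_n-\eps_k)\le u_k(x)\le u_0^{(k)}(x_n+\eps_k)\quad\text{in }B_1,$$
where $u_0^{(k)}$ denotes the 1D profile \eqref{a} for $\gamma_k$. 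Define the normalized perturbation
$$\tilde u_k(x):=\frac{u_k(x)-u_0^{(k)}(x_n)}{\eps_k\,[u_0^{(k)}]'(x_n)},$$
which, to leading order in $\eps_k$, measures the horizontal displacement needed to match $u_k$ with the flat profile; the flatness hypothesis gives $|\tilde u_k|\le C$.

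The uniform partial Harnack inequality developed in Section~3 then upgrades this bound to equicontinuity of the $\tilde u_k$ on compact subsets of $\overline{B_{1/2}\cap\{x_n\ge 0\}}$, uniformly in $k$. Passing to a subsequence with $\gamma_k\to\gamma^*\in[0,2]$, we extract a locally uniform limit $\tilde u$ solving a linearized free boundary problem: a (possibly weighted) elliptic equation in $\{x_n>0\}$ together with a Neumann-type condition on $\{x_n=0\}$ inherited from the expansion in Definition~\ref{defn}. Depending on $\gamma^*$, this limit problem is either the linearization studied in \cite{DS2} (case $\gamma^*\in(0,2)$), the linearization of the Dirichlet--perimeter functional $\mathcal F$ of \cite{ACKS,DS3} (case $\gamma^*=2$), or the linearization of the Alt--Caffarelli functional \cite{AC,JS} (case $\gamma^*=0$). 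In every regime, boundary $C^{1,\alpha}$ regularity at $0$ provides $b\in\R^{n-1}$ and $C=C(n)$ with
$$|\tilde u(x)-\tilde u(0)-b\cdot x'|\le C\rho^{1+\alpha}\quad\text{on }B_\rho\cap\{x_n\ge 0\}.$$

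Reverting to $u_k$ and choosing $\rho=\rho(n)$ small enough, this estimate forces, for $k$ large and for every $y\in F(u_k)\cap B_\rho$, the $(\eps_k/2)$-flatness of $u_k$ at every admissible scale in the tilted direction $(e_n+\eps_k b)/|e_n+\eps_k b|$; hence $u_k\in\mathcal S(\rho,\eps_k/2)$, contradicting the hypothesis. The main obstacle is precisely the uniformity in $\gamma$: the 1D profile $u_0^{(k)}$ degenerates at both endpoints---$[u_0^{(k)}]'$ blows up at the free boundary as $\gamma_k\to 2$ (where $\alpha_k\to 1/2$) and the natural rescaling collapses as $\gamma_k\to 0$---so the classical single-$\gamma$ partial Harnack has constants that blow up in these limits. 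The uniform partial Harnack and the identification of the linearized operator across all three regimes, carried out in Section~3 on the basis of the rescaled functional $J_\gamma$ of \eqref{Jf} and anchored by the $\Gamma$-convergence $J_\gamma\to\mathcal F$ recalled in Theorem~\ref{TMDS3}, are the technical heart of the argument; without them the fixed-$\gamma$ scheme does not suffice.
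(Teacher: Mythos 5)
Your high-level skeleton is right (compactness/contradiction, linearize, apply $C^{1,\alpha}$ regularity of the fixed-boundary linearized operator, untangle and reach a contradiction), but you misidentify the technical core of the argument in two ways that matter.

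First, the paper does not linearize directly around $u_0^{(k)}$ in the $u$-variable. It first performs the change of variables $w := c_\alpha^{-1/\alpha} u^{1/\alpha}$, which turns \eqref{v} into $\Delta w = s\,h(\nabla w)/w$ with the single parameter $s := 2(\alpha-1) \in (-1,0)$, and then linearizes $\tilde w = (w - x_n)/\eps$. This step is what makes the linearized operator appear in the clean, $s$-indexed form $\Delta v + s\, v_n/x_n = 0$ in $B_1^+$ with the Neumann condition $\partial_{x_n^{1-s}} v = 0$ on $\{x_n=0\}$, and it is what lets one track the degeneration $\gamma_k \to 2 \Leftrightarrow s_k \to -1$ precisely. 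Your alternative normalization $\tilde u_k = (u_k - u_0^{(k)}(x_n))/(\eps_k [u_0^{(k)}]'(x_n))$ is asymptotically the same object, but you never write down the limiting PDE nor check that its solutions exist in a compactness-friendly class; the paper devotes Lemma~\ref{L08} and Corollary~\ref{C09} to showing that the $s \to -1$ endpoint problem ($\Delta v - v_n/x_n = 0$ with the boundary condition $\Delta_{x'} v(\cdot,0)=0$, which is \emph{not} a Neumann condition) is the correct limit and inherits the estimates.

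Second, and more seriously, you ascribe the uniform $C^{1,\alpha}$ estimate for the linearized problem to the $\Gamma$-convergence $J_\gamma \to \mathcal F$ of Theorem~\ref{TMDS3}. That is not what happens, and it cannot give what is needed: $\Gamma$-convergence controls convergence of minimizers and their energies, not Schauder-type boundary regularity constants for a degenerate linear Neumann problem uniformly as the weight $x_n^s$ becomes non-integrable. The paper's Section~3 (Proposition~\ref{UE}) obtains the uniform-in-$s$ estimate by an entirely different route: a Caccioppoli inequality in the $x_n^s$-weighted space, iterated tangential derivatives plus Sobolev embedding to get a Lipschitz bound in $x'$ on a slice $\{x_n=t\}$ with $t \lesssim a$, $a^{1+s}=1/2$, and then explicit barriers $q_1, q_2$ built from $x_n^2$, $x_n^{1-s}$ and the combination $\frac{1}{1+s}(x_n^2 - x_n^{1-s}) \to x_n^2\log x_n$. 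The entire difficulty as $s \to -1$ lives in these barriers; nothing in your sketch replaces them. Relatedly, your "three regimes'' classification of the limit as the DS2, ACKS, or Alt--Caffarelli linearization is not the mechanism used (the paper has a single $s$-parametrized family with a genuine but controlled degeneration at $s=-1$), and the $\gamma\to 0$ end is in fact the easy end ($s\to 0$ gives the classical Neumann problem), contrary to your claim that "the natural rescaling collapses'' there. Your partial Harnack attribution to Section~3 is also misplaced: the oscillation decay for $\tilde w_k$ comes from the $\mathcal S$-class structure itself together with the interior Harnack lemma (Lemma~\ref{H}) in Section~4, while Section~3 concerns only the linear Neumann problem.
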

The proof of this theorem relies on a compactness argument which linearizes the problem into a fixed-boundary degenerate linear problem with a ``Neumann" boundary condition, whose property we analyze in the next section.

\section{Uniform estimates for the linearized equation}

Here, we discuss the linearized problem associated to our free boundary problem. We refer to some of our previous results in \cite{DS1}, where we studied the regularity of the free boundary for a class of degenerate problems.

For $s \in (-1,1)$ we consider solutions of 
\begin{equation}\label{LE1}
\Delta v + s \frac{v_n}{x_n} =0 \quad \mbox{in $B_1^+$,}  
\end{equation}
which satisfy the ``Neumann" boundary condition
\begin{equation}\label{LE2}
\p_{x_n^{1-s}} v =0 \quad \mbox{on $\{x_n=0\}$,}
\end{equation}
in the viscosity sense. This means that $v$ cannot be touched by below (above) locally by the family of comparison functions
$$p(x') + t x_n^{1-s} \quad \mbox{with $t>0$ ( or $t<0$) and $p(x')$ quadratic},$$
at points on $\{x_n=0\}$. There is a unique solution to the Dirichlet problem which assigns continuous data on $\p B_1 \cap \{x_n \ge 0\}$, and in the case of sufficiently regular data the solution is the minimizer of the energy
$$ \int_{B_1^+} |\nabla v|^2 \, x_n^{s} dx_n.$$ 

Problem \eqref{LE1} in ${\R^n}^+$ is the extension problem of Caffarelli-Silvestre \cite{CSi1}, with the Dirichlet to Neumann operator representing $\Delta^{\frac{1-s}{2}}v$ on $\{x_n=0\}$. This is well-known problem, yet here we are interested in its local version, which appears in connection with a variety of degenerate free boundary problems. In \cite{DS1}, we studied a version of \eqref{LE1} for more general linear operators that do not necessarily have a variational structure. We make some remarks on the range of $s$ in equation \eqref{LE1}. When $s \in (-1,1)$, both the Dirichlet and the Neumann boundary conditions can be imposed on $x_n=0$. However, when $s \geq 1$, only the Neumann condition can be imposed and it simply requires the function to be bounded. When $s \leq -1$, the Dirichlet condition is meaningful, but the Neumann condition in the sense defined above cannot be imposed.

Now, we start by obtaining uniform (independent on $s$) $C^{1,\alpha}$ estimates for solutions to this Neumann problem.

\begin{prop}[Uniform $C^{1,\alpha}$ estimates]\label{UE}
Let $s \in (-1,0]$, and let $v$ be a solution to the Neumann problem \eqref{LE1}-\eqref{LE2}. Then 
\begin{equation}\label{v-v0}
|v- v(0)-a' \cdot x'| \le C |x|^{1+\alpha}\|v\|_{L^\infty}, \quad \quad \mbox{for some $a'\in \R^{n-1}$,}
\end{equation}
with $C,\alpha >0$ depending only on $n$ (but not on $s$).

\end{prop}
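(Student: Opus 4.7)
The plan is to reduce \eqref{v-v0} to a standard dyadic improvement-of-flatness iteration at the boundary point $0$, whose single-scale building block I would formulate as follows: there exist $\rho\in(0,1)$, $\alpha\in(0,1)$, and $C>0$, depending only on $n$, such that whenever $s\in(-1,0]$ and $v$ solves \eqref{LE1}-\eqref{LE2} with $\|v\|_{L^\infty(B_1^+)}\le 1$ and $v(0)=0$, one can find $a'\in\mathbb{R}^{n-1}$ with $|a'|\le C$ satisfying
$$\sup_{B_\rho^+}\,|v-a'\cdot x'|\le \rho^{1+\alpha}.$$
Since tangential linear functions $a'\cdot x'$ are themselves solutions of \eqref{LE1} with vanishing Neumann trace, the rescaled function $\tilde v(x)=\rho^{-(1+\alpha)}\bigl(v(\rho x)-\rho\, a'\cdot x'\bigr)$ solves \eqref{LE1}-\eqref{LE2} with the same $s$ and satisfies $\|\tilde v\|_{L^\infty(B_1^+)}\le 1$, so the building block can be iterated at the scales $\rho^k$; the resulting Cauchy sequence of tangential slopes $a_k$ converges to the desired $a'$ in \eqref{v-v0}.

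To prove the building block I would argue by compactness and contradiction, leaning on uniform $C^\beta$ boundary estimates for the Neumann problem with constants independent of $s\in(-1,0]$, of the type obtained in \cite{DS1}. Suppose the statement fails: then one produces parameters $s_k\in(-1,0]$ and normalized solutions $v_k$ that violate it. The uniform Hölder bounds furnish, along a subsequence, a uniform limit $v_k\to v_\infty$ on $\overline{B_{3/4}^+}$ with $v_\infty(0)=0$ and $\|v_\infty\|_{L^\infty}\le 1$, and a further subsequence with $s_k\to s_\infty\in[-1,0]$. The comparison functions $p(x')+t\,x_n^{1-s_k}$ used in the viscosity definition of \eqref{LE2} converge locally uniformly, so $v_\infty$ is itself a viscosity solution of the Neumann problem with parameter $s_\infty$.

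For $s_\infty\in(-1,0]$ the limit equation is equivalent (via even reflection in $x_n$) to the $A_2$-weighted divergence form equation $\operatorname{div}(|x_n|^{s_\infty}\nabla v_\infty)=0$ and carries classical $C^{1,\alpha}$ boundary regularity; taking $\rho$ small and invoking the uniform convergence of the $v_k$ contradicts the assumed failure for large $k$. The genuinely delicate case is $s_\infty=-1$: the weight $x_n^{-1}$ is no longer Muckenhoupt, and the one-dimensional homogeneous profiles $x_n^{1-s}$ degenerate to $x_n^{2}$, so standard weighted theory cannot be invoked as a black box. Here I would analyze the limit viscosity problem directly: the admissible touching functions at $\{x_n=0\}$ become tangential quadratic polynomials plus a multiple of $x_n^{2}$, and together with the uniform $C^\beta$ bound this rigidity is strong enough to force $v_\infty$ to admit a tangential first-order expansion at $0$ with a quantitative modulus depending only on $n$. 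Once this endpoint regularity is in hand, the compactness argument closes uniformly in $s_\infty\in[-1,0]$.

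The main obstacle is precisely this endpoint $s=-1$: the uniform $C^{1,\alpha}$ estimate cannot be extracted as a black box from \cite{DS1} or from Caffarelli--Silvestre-type extension theory, and must be deduced from a careful direct analysis of the limit viscosity problem; the remainder of the argument follows the by-now-standard compactness-and-rescale template for boundary $C^{1,\alpha}$ regularity.
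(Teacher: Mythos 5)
Your plan has a circularity that undermines the compactness step. You propose to lean on ``uniform $C^\beta$ boundary estimates \ldots with constants independent of $s\in(-1,0]$, of the type obtained in \cite{DS1}'' in order to extract a locally uniform limit $v_k\to v_\infty$. But those uniform $C^\beta$ estimates are precisely what is \emph{not} available from \cite{DS1}: as the paper points out explicitly, the oscillation-decay Harnack estimate there is proved with barriers of the form $-|x'|^2+\frac{1}{1+s}x_n^2\pm\eps\,x_n^{1-s}$, which degenerate as $s\to-1$, so the resulting H\"older exponent and constant blow up. Establishing a uniform oscillation decay as $s\to-1$ \emph{is} the main technical content of the proposition, not an available input; without it the normalized sequence $v_k$ has no uniform modulus of continuity up to $\{x_n=0\}$, and your compactness argument never gets off the ground.

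The paper resolves this directly, not by compactness. In outline: a Caccioppoli inequality, iterated in tangential derivatives, controls $\int |D^m_{x'}v|^2\,x_n^s\,dx$; because the measure $x_n^s\,dx_n$ concentrates near $x_n=0$ as $s\to-1$ (choose $a$ with $a^{1+s}=1/2$, so $a\to0$), there must exist a slice $\{x_n=t\}$ with $t\le a$ on which $v$ is tangentially Lipschitz by Sobolev embedding. From the value of $v$ on that slice one propagates a one-sided bound down to $\{x_n=0\}$ and up to $\{x_n=1/2\}$ via the explicit subsolutions built from $-|x'|^2+\frac{1}{1+s}x_n^2+x_n^{1-s}$ and from $-|x'|^2+\frac{1}{1+s}(x_n^2-x_n^{1-s})$; the key observation \eqref{1002} is that $\frac{1}{1+s}(x_n^2-x_n^{1-s})\to x_n^2\log x_n$, so these barriers stay nondegenerate as $s\to-1$. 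This yields the Harnack alternative \eqref{1000} with constants depending only on $n$, hence a uniform $C^\beta$ estimate; iterating tangential differences and then trapping the remaining $x_n$-dependence between $\pm C\bigl(|x'|^2-\frac{1}{1+s}(x_n^2-x_n^{1-s})\bigr)$ gives the explicit rate $|v-v(0)-a'\cdot x'|\le C|x|^{3/2}$.

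A secondary inaccuracy: you say the admissible touching profiles at $s=-1$ degenerate to ``tangential quadratic polynomials plus a multiple of $x_n^2$''. But $x_n^2$ alone carries no boundary information in the limit; the correct renormalized family is $\frac{1}{1+s}(x_n^2-x_n^{1-s})\to x_n^2\log x_n$, and the limiting boundary condition at $s=-1$ is the Dirichlet-type condition $\Delta_{x'}v=0$ on $\{x_n=0\}$ (this is Lemma \ref{L08} and \eqref{1005}), not a Neumann condition phrased in terms of an $x_n^2$ profile. Even if you eventually want to argue by compactness against this corrected limit problem, the uniform modulus of continuity must be established first, and that is exactly where the paper's barrier construction does the work.
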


\begin{proof}

In \cite{DS1} we obtained the estimate above for constants $C$ and $\alpha$ that depend on $s$. The proof in \cite{DS1} does not fully apply here as the constants in the main Harnack estimate (Lemma 7.4) deteriorate as $s \to -1$.

We recall briefly the key steps from \cite{DS1} which imply the Proposition in the case when we restrict $s$ to a compact interval of $(-1, 0]$, and then we explain how to modify the argument when $s$ is close to $-1$. 

In Proposition 7.5 of \cite{DS1} we showed that the difference of two viscosity solutions is a viscosity solution. Then the conclusion follows by iterating a Harnack estimate of the type
\begin{equation}\label{Hi0}
osc_{B^+_{\rho}} \, v \le (1-\eta)   \,  osc_{B^+_{1}} \, v, \quad \quad \rho,\eta>0,
\end{equation}
for discrete differences in the $x'$ direction. The Harnack estimate \eqref{Hi0} was achieved by writing the interior estimate in a ball $B_{1/4}(e_n/2)$ and then extending it to the flat boundary with the aid of explicit barriers of the type
$$ -|x'|^2 + \frac{1}{1+s} x_n^2 \pm \eps x_n^{1-s}.$$
These barriers degenerate as $s \to -1$, and so do the constants $\eta$ and $\rho$.

Below we show by a different argument that the estimate \eqref{Hi0} holds with constants $\rho$ and $\eta$ that depend only on the dimension $n$ when $s$ is near to $-1$. 

Let us assume 
$$0 \le v \le 1 \quad \mbox{ in} \quad B_1^+,$$ 
and we claim that 
\begin{equation}\label{1000}
\mbox{ either} \quad v \ge \eta \quad \mbox{ or} \quad v \le 1-\eta \quad \mbox{in} \quad  B_ \rho^+,
\end{equation}
 for small constants $\eta$ and $\rho$ to be specified below.

The limit 
$$\lim_{x_n \to 0^+} v_n x_n^{s} =0$$
exists in the classical sense, hence $v$ is also a weak solution i.e. 
$$ \int_{B_1^+} \nabla v \cdot \nabla w \, \, x_n^{s} d x =0,$$
for any $C^1$ function $w$ which vanishes near $\p B_1$.
By taking $w=\varphi^2 v$ with $\varphi$ a cutoff function which is 1 in $B_{1/2}$ and $0$ near $\p B_1$, we find the Caccioppoli inequality
$$ \int_{B_{1/2}^+} |\nabla v|^2 x_n^{s} dx \le C \int_{B_{1}^+} v^2 x_n^{s} dx $$
with $C$ independent of $s$. We iterate this $m=m(n)$ times by taking derivatives in the $x'$ direction, and obtain
\begin{equation}\label{1000.5}
\int_{B_{2^{-m}}^+} |D^m_{x'} v|^2 x_n^{s} dx \le C(m) \int_{B_{1}^+} v^2 x_n^{s} dx.
\end{equation}
Let $a>0$ be given by 
$$a^{1+s}:= \frac 12,$$
so that
$$\int_0^a x_n^{s} d x_n= \frac 12 \int_0^1 x_n^{s}dx_n.$$ Notice that $a \to 0$ as $s \to -1$, which means that the weight $x_n^{s}$ concentrates its mass near $x_n=0$ when $s$ is close to $-1$. Since $v \le 1$, \eqref{1000.5} implies that there exists $ t \in (0,a]$ such that
$$\int_{B_{2^{-m}}^+ \cap \{x_n=t\}} |D^m_{x'} v|^2 dx' \le C(m).$$
We choose $m$ sufficiently large so that the Sobolev embedding gives
$$| \nabla_{x'} v| \le C \quad \mbox {on} \quad B_{2^{-{(m+1)}}}^+ \cap \{x_n=t\}.$$
Assume that at $t e_n$, the value of $v$ is closer to 1 than to 0, i.e 
$$ v(t e_n) \ge \frac 1 2.$$
Then, the Lipschitz bound in $x'$ implies
\begin{equation}\label{1001}
v(x',t) \ge c_0 - C_0 |x'|^2,
\end{equation}
for constants $c_0$ small, and $C_0$ large that depend only on $n$. 

In the cylinder 
$$ \mathcal C_1:=B'_{1/2}\times [0,t]$$
we use as lower barrier
$$q_1(x):= \frac {c_0}{ 2} + C_0 \left (- |x'|^2 +  \frac{1}{1
+s} x_n^2 +  x_n^{1-s} \right).  $$
Notice that $q_1$ satisfies the equation \eqref{LE1} and is a strict subsolution for the Neumann condition on $\{x_n=0\}$. Moreover, on $x_n=t$, we use that $t \le a$ to find
$$\frac{1}{1+s} x_n^2 + x_n^{1-s} \le \frac{1}{1+s} a^2 + a^{1-s} \to 0 \quad \mbox{as $s \to -1$,}$$
hence $q_1 \le v$ by \eqref{1001}. The inequality above shows that $q_1 \le 0 \le v$ on the lateral boundary of $\mathcal C_1$ and by the maximum principle we find $$ v \ge q_1 \quad \mbox{in} \quad \mathcal C_1.$$
In the cylinder 
$$ \mathcal C_2:=B'_{1/2}\times [t,\frac 12]$$
we use as lower barrier
$$q_2(x):=  c_0  + C_0 \left (- |x'|^2 +  \frac{1}{1+s} (x_n^2 -  x_n^{1-s}) \right).  $$
Notice that as $ s \to -1$, 
\begin{equation}\label{1002}
\frac{1}{1+s} (x_n^2 -  x_n^{1-s}) \to x_n^2 \, \log x_n,
\end{equation}
uniformly in $[0,1]$, hence
$q_2 \le 0 \le v$ on $\p \mathcal C_2 \setminus \{ x_n =t\}$. On the remaining part of the boundary $q_2 \le v$ by \eqref{1001}. In conclusion $$v \ge q_2 \quad \mbox{ in} \quad \mathcal C_2.$$
The claim \eqref{1000} is proved since $q_1$ and $q_2$ are both greater than $\eta:=c_0/3$ in a sufficiently small ball $B_\rho^+$ of fixed radius. 

We can iterate the claim \eqref{1000}  and obtain that solutions to \eqref{LE1}-\eqref{LE2} which are normalized so that
$$\|v\|_{L^\infty(B_1^+)}=1,$$
satisfy 
$$ |v(x)-v(0)| \le C |x|^\beta,$$
for some fixed $\beta>0$, and for all $s$ sufficiently close to $-1$. 
Using scaling and interior estimates, we find
$$
\|v\|_{C^\beta(B^+_{1/2})} \le C.$$
This estimate can be iterated for discrete differences in the $x'$ direction, and we obtain
\begin{equation}\label{1003}
\|D_{x'}^m v\|_{C^\beta(B^+_{1/2})} \le C(m).
\end{equation}
After subtracting a linear function in the $x'$ variable we may assume further that
$$ |v(x',0)| \le C |x'|^2,$$
for some $C$ large. Now we bound $v$ in the remaining $x_n$ direction by trapping it between the barriers (see \eqref{1002})
$$\pm C \left (|x'|^2 -  \frac{1}{1+s} (x_n^2 -  x_n^{1-s}) \right).$$
We obtain the desired conclusion 
$$|v| \le C |x|^{3/2},$$
since
$$\frac{1}{1+s} |x_n^2 -  x_n^{1-s}| \le |x_n|^{1-s} |\log \, x_n|.$$

\end{proof}

Next we show that the limiting equation to \eqref{LE1}-\eqref{LE2} as $s \to -1$ is
\begin{equation}\label{1004}
\Delta v - \frac{v_n}{x_n} =0 \quad \mbox{in} \quad B_1^+, 
\end{equation} 
with boundary data on $\{x_n=0\}$ which is harmonic 
\begin{equation}\label{1005}
\Delta_{x'} v(x',0) =0.
\end{equation} 
\begin{lem}\label{L08}
Let $v_m$ be a sequence of solutions to \eqref{LE1}-\eqref{LE2} for $s_m \to -1$, with $\|v_m\|_{L^\infty(B_1^+)}\le 1$. Then there exists a subsequence 
which converges uniformly on compact sets to a solution of \eqref{1004}-\eqref{1005}.

Conversely, any continuous solution of \eqref{1004}-\eqref{1005} is the limit of a sequence of solutions $v_m$ to \eqref{LE1}-\eqref{LE2} with $s_m \to -1$.
\end{lem}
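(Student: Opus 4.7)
The strategy is to use Proposition 3.1 for uniform compactness and then read off the limiting boundary condition from the degeneracy of the limit PDE at $\{x_n=0\}$. For the forward direction, Proposition 3.1 provides uniform $C^{1,\alpha}$ bounds up to the flat boundary, and on each compact subset of $\{x_n>0\}$ the coefficient $s_m/x_n$ is smooth and uniformly bounded, so classical interior Schauder estimates give uniform $C^k_{\text{loc}}(B_1^+)$ bounds. After a diagonal extraction, $v_m \to v$ uniformly on compacts of $\overline{B_1^+}$ and smoothly inside $B_1^+$, and $\Delta v_m + s_m (v_m)_n/x_n = 0$ passes to the limit to give \eqref{1004}.

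To pin down the boundary condition, I would exploit the expansion \eqref{v-v0}: applied at any point of the flat boundary it forces $v_n(x',0) = 0$, since only a tangential linear term appears. Tangential discrete differences of $v_m$ are themselves solutions of \eqref{LE1}--\eqref{LE2} with uniform $L^\infty$ bounds inherited from the $C^{1,\alpha}$ control, so iterating Proposition 3.1 upgrades the trace $v(\cdot, 0)$ to be smooth in $x'$ and gives a $C^2$ bound for $v$ up to $\{x_n=0\}$. A Taylor expansion in $x_n$ combined with $v_n(x',0)=0$ then yields $v_n/x_n = v_{nn}(x',0) + O(x_n)$, and sending $x_n \to 0^+$ in \eqref{1004} leaves $\Delta_{x'} v(x',0) = 0$, which is precisely \eqref{1005}.

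For the converse, given a continuous solution $v$ of \eqref{1004}--\eqref{1005} in $B_1^+$, I would solve, for each $m$, the Dirichlet problem for \eqref{LE1}--\eqref{LE2} on a shrunken half-ball $B_{1-\delta}^+$ with exponent $s_m$ and trace of $v$ prescribed on the curved boundary $\partial B_{1-\delta} \cap \{x_n > 0\}$. The forward direction produces a subsequential limit $\tilde v$ of \eqref{1004}--\eqref{1005} with the same data on $\partial B_{1-\delta} \cap \{x_n > 0\}$. Since the traces $\tilde v(\cdot, 0)$ and $v(\cdot, 0)$ are both $x'$-harmonic on $B_{1-\delta} \cap \{x_n = 0\}$ with identical values on its relative boundary, they coincide there, so $\tilde v - v$ has zero Dirichlet data on all of $\partial B_{1-\delta}^+$; the maximum principle for the operator $x_n\Delta - \partial_n$ (uniformly elliptic away from $\{x_n=0\}$) then forces $\tilde v = v$ in $B_{1-\delta}^+$, and letting $\delta \to 0$ completes the proof.

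The main technical obstacle I anticipate is identifying \eqref{1005}: the viscosity Neumann barriers $p(x') + t\, x_n^{1-s_m}$ of \eqref{LE2} degenerate as $s_m \to -1$, so the boundary condition cannot be transported directly through a viscosity passage to the limit. The plan sidesteps this by first lifting regularity via Proposition 3.1 applied to tangential difference quotients (which again satisfy \eqref{LE1}--\eqref{LE2}) and then letting the degeneracy of the limit equation \eqref{1004} itself encode \eqref{1005}.
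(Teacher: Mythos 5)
There is a genuine gap in your identification of the boundary condition \eqref{1005}. You claim that iterating Proposition~\ref{UE} on tangential difference quotients yields a $C^2$ bound for the limit $v$ up to $\{x_n=0\}$, so that a Taylor expansion in $x_n$ gives $v_n/x_n = v_{nn}(x',0)+O(x_n)$ and \eqref{1005} drops out of \eqref{1004}. But Proposition~\ref{UE} provides a pointwise $C^{1,\alpha}$ expansion with a definite $\alpha<1$ (indeed $\alpha=1/2$ from the end of its proof), and tangential iteration raises regularity only in the $x'$ variables; it does not control $v_{nn}$ up to the flat boundary. From the equation $v_{nn}=-\Delta_{x'}v-s\,v_n/x_n$, having $v_n(x',0)=0$ with only $v_n=O(x_n^{\alpha})$, $\alpha<1$, leaves $v_n/x_n$ unbounded a priori. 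Indeed, after subtracting the linear part the paper's barriers only give $|v|\le C(|x'|^2 + x_n^2|\log x_n|)$, which is compatible with $v_{nn}$ blowing up logarithmically. The explicit function $x_1^2 - x_n^2\log x_n$ solves \eqref{1004}, satisfies the estimate \eqref{v-v0} and all its tangential-derivative versions, yet violates \eqref{1005} and fails to be $C^2$ at $\{x_n=0\}$: the $C^2$ regularity you invoke is a \emph{consequence} of \eqref{1005}, not a precursor to it, so the argument is circular.

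The paper sidesteps this by proving \eqref{1005} in the viscosity sense via a barrier contradiction: if the limit $\bar w=v$ were touched strictly by below at a flat boundary point by a quadratic $p(x')$ with $\Delta_{x'}p=t>0$, then for large $m$ the explicit comparison function $p(x') - Mx_n^2 + \tfrac{t}{4(1+s_m)}(x_n^{1-s_m}-x_n^2)+\text{const}$ would touch $v_m$ by below in $B_\delta\cap\{x_n\ge 0\}$; one checks this is a strict subsolution of \eqref{LE1}--\eqref{LE2}, a contradiction. The essential observation, encoded in \eqref{1002}, is that this family of barriers converges uniformly to $p(x')-Mx_n^2+\tfrac t4 x_n^2|\log x_n|$, which is a subsolution of the limit equation; this is precisely the mechanism you need and cannot extract from pointwise $C^{1,\alpha}$ bounds alone. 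Your converse direction is in the right spirit and is close to the paper's (solve Dirichlet problems at exponents $s_m$, pass to the limit, conclude by uniqueness), but be aware that uniform convergence up to $\partial B_{1-\delta}\cap\{x_n\ge 0\}$ requires its own barrier argument, which the paper points out separately; the maximum-principle step for $\tilde v - v$ is fine once you have continuity up to the closed half-ball.
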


\begin{cor}\label{C09}
A continuous solution to \eqref{1004}-\eqref{1005} satisfies the $C^{1,\alpha}$ estimate \eqref{v-v0} in Proposition $\ref{UE}.$
\end{cor}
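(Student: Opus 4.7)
The plan is a direct approximation argument that transports the uniform estimate of Proposition~\ref{UE} through the limit $s_m \to -1^+$ provided by Lemma~\ref{L08}. Given a continuous solution $v$ of \eqref{1004}-\eqref{1005} in $B_1^+$, and by the homogeneity of the target inequality \eqref{v-v0}, we may normalize $\|v\|_{L^\infty(B_1^+)} \le 1$. The converse part of Lemma~\ref{L08} then supplies a sequence $v_m$ of solutions to \eqref{LE1}-\eqref{LE2} with $s_m \to -1^+$ that converges to $v$ uniformly on compact subsets (including portions of the flat boundary $\{x_n=0\}$). In particular $\|v_m\|_{L^\infty(B_{1/2}^+)} \le 2$ for all $m$ large.

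Next, I would apply Proposition~\ref{UE} to each $v_m$ (after a harmless rescaling so that its $B_1^+$-version applies on $B_{1/2}^+$), noting that its constants depend only on $n$, not on $s_m \in (-1,0]$. This yields vectors $a'_m \in \R^{n-1}$ and fixed constants $C=C(n)$, $\alpha=\alpha(n)$ such that
\begin{equation*}
|v_m(x) - v_m(0) - a'_m \cdot x'| \le C\,|x|^{1+\alpha}, \qquad x \in B_{1/4}^+.
\end{equation*}
To pass to the limit I need uniform boundedness of $\{a'_m\}$. Evaluating the estimate at $x = \rho e_i$ on the flat boundary, for a fixed small $\rho$ and $i=1,\dots,n-1$, and using $|v_m|\le 2$, gives $|a'_m \cdot e_i| \le (4 + C\rho^{1+\alpha})/\rho$, which is uniform in $m$. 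Along a subsequence, $a'_m \to a'$ for some $a' \in \R^{n-1}$.

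Finally, I would let $m \to \infty$ in the pointwise inequality. The uniform convergence $v_m \to v$ on the compact set $\overline{B_{1/4}^+}$ (which also yields $v_m(0) \to v(0)$), combined with $a'_m \to a'$, produces \eqref{v-v0} for $v$ on $B_{1/4}^+$. Undoing the normalization restores the $\|v\|_{L^\infty}$ factor. The entire argument is a routine compactness-and-limit, and the only step requiring thought is the uniform boundedness of the coefficient vectors $\{a'_m\}$, which I expect to be the mild ``main obstacle''; it is settled above by testing the estimate at a single point on $\{x_n = 0\}$ and using the uniform $L^\infty$ bound.
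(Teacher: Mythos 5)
Your proposal is correct and is precisely the argument the paper intends: the statement is labeled a corollary of Proposition~\ref{UE} and Lemma~\ref{L08} and given no written proof, so the expected reasoning is exactly your approximation-and-limit scheme (approximate $v$ by solutions $v_m$ of the $s_m$-problem via the converse direction of Lemma~\ref{L08}, invoke the $s$-uniform estimate of Proposition~\ref{UE} for each $v_m$, extract a convergent subsequence of the coefficient vectors $a'_m$, and pass to the limit). Your observation that the uniform bound on $\{a'_m\}$ is the one nontrivial checkpoint, and your one-line derivation of it by testing at a fixed point $\rho e_i$ on $\{x_n=0\}$, is exactly right; one small simplification you could make is that the proof of the converse part of Lemma~\ref{L08} actually gives uniform convergence on all of $\overline{B_1^+}$ (not just compact subsets), so the $L^\infty$ bound on $v_m$ is immediate and the preliminary rescaling to $B_{1/2}^+$ is unnecessary, though harmless.
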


\

\noindent\textit{Proof of Lemma $\ref{L08}$.}
 By \eqref{1003} we find that, after passing to a subsequence, we can extract a subsequence $s_m \to -1$ such that 
 $$v_{m} \to v \quad \mbox{ uniformly in} \quad  B_1\cap \{x_n\ge 0\},$$
 with $v$ a H\"older continuous function. Clearly $v$ satisfies \eqref{1004}. 
 
 We show that $v$ satisfies \eqref{1005} in the viscosity sense. Assume by contradiction that on $x_n=0$ we can touch $v$ strictly by below in $B_\delta'$, say at $0$, with a quadratic polynomial $p(x')$ with $$\Delta_{x'} p =t >0.$$ Then, we choose $M$ sufficiently large such that 
 $$ v > p(x') - M x_n^2 +  \frac {t}{4} x_n^2 |\log \, x_n|  \quad \quad \mbox{on} \quad \p B_\delta \cap \{ x_n \ge 0\}.$$
 The uniform convergence of the $v_m$'s to $v$ and \eqref{1002} imply that $v_m$ can be touched by below in $B_\delta \cap \{ x_n \ge 0\}$ by 
 $$ p(x') - M x_n^2 + \frac{t}{4(1+s_m)} (x_n^{1-s_m} - x_n^2) + const.$$
 It is straightforward to check that this function is a strict subsolution to \eqref{LE1}-\eqref{LE2} for all large $m$, and we reached a contradiction.
 
 For the second part, let $\phi$ be a continuous function on $\p B_1$ and let $v_s$ be the solution to the Neumann problem \eqref{LE1}-\eqref{LE2} with prescribed data $\phi$ on $\p B_1$. Then, it suffices to show that as $s \to -1$, $v_s$ converges uniformly in $\overline{B_1^+}$ to the solution of \eqref{1004}-\eqref{1005} with boundary data $\phi$.
 
 The convergence in the interior region $B_1 \cap \{x_n \ge 0\}$ was proved above. On the boundary $\p B_1 \cap \{x_n \ge 0\}$ this follows from standard barrier arguments. Indeed, at the points on $\p B_1 \cap \{x_n=0\}$ the linear functions in $x'$ variables act as barriers for all $v_s$, while at the remaining points on $\p B_1 \cap \{x_n>0\}$ the limiting equation is nondegenerate.
\qed

\section{Proof of Theorem \ref{TI}}

In this section we provide the proof of our main uniform improvement of flatness Theorem \ref{TI}.

After a change of variables we rewrite the equation in \eqref{v} in the form
$$\Delta w = \frac{h_s(\nabla w)}{w}.$$
Precisely, we denote 
\begin{equation}\label{change} w:= c_\alpha^ {- \frac 1 \alpha} u^\frac 1 \alpha,\end{equation}
so that $u = c_\alpha w^\alpha$. The equation for $w$ is
$$c_\alpha \alpha w ^{\alpha -2} \left( w\Delta w + (\alpha -1)  |\nabla w|^2 \right) = - \frac \gamma 2 c_\alpha^{-(\gamma+1)} w^{-\alpha(\gamma+1)},$$
and using \eqref{a} we find
$$ \Delta w = (1-\alpha) \frac{|\nabla w|^2 -1}{w}.$$
We write this as
\begin{equation}\label{weq}
 \Delta w =  :\frac{h_s(\nabla w)}{w}=s \frac{h(\nabla w)}{w}
 \end{equation}
with
 $$h(\nabla w)= \frac 12 (1-|\nabla w|^2) , \quad \quad  \quad s :=2(\alpha-1) \in (-1,0).$$
Here $h$ is the radial quadratic function which vanishes on $\p B_1$, it is negative in $B_1$ and positive outside $B_1$ and 
\begin{equation}\label{nablah}
 \nabla h( \omega) = - \, \omega, \quad \mbox{if} \quad \omega \in \p B_1. 
 \end{equation}
Notice that \eqref{weq} remains invariant under the rescaling
$$\ \tilde w(x)=\frac {w(rx)} {r}.$$
In view of the viscosity definition for $u$, we find that $w$ satisfies \eqref{weq} with the following free boundary condition on $\p \{w>0\}$:

\begin{defn}\label{defnw} We say that $w: \Omega \to \mathbb R^+$ satisfies \eqref{weq} in the viscosity sense, if $w$ is $C^\infty$ and satisfies the equation in the set $\{w>0\} \cap \Omega$ and, if $x_0 \in F(w):= \p \{w>0\} \cap \Omega$, then $w$ cannot touch $\psi \in \mathcal C^+$ (resp. $\mathcal C^-$) by above (resp. below) at $x_0$, with $$\psi(x):= d(x) + \mu \,  d(x)^{1-s},$$ $\alpha$ as in \eqref{a} and $\mu>0$ (resp. $\mu<0$).
\end{defn}
Similarly, we keep the same notation as in Definition \ref{sre} for the corresponding solutions $w$ to \eqref{weq}. Precisely, 
\begin{defn} \label{srew}We say that 
$$ w \in \mathcal S(r,\eps) $$
if $0 \in \p \{ w>0\}$ and for any ball $B_t(y) \subset B_r$ centered on the free boundary $y \in \p \{w >0\}$, $w$ is $\eps$-flat in $B_t(y)$ i.e. there exists a unit direction $\nu$ depending on $t, y,$ such that
$$(x \cdot \nu - \eps t)^+ \le w(y+x) \le (x \cdot \nu + \eps t)^+ \quad \mbox{if $|x| \le t$.}$$

\end{defn}

We state the main result of this section from which Theorem \ref{TI} easily follows.

\begin{prop}\label{P1}
Assume that $w$ is a viscosity solution of \eqref{weq} in $B_1$, and
$$ w \in \mathcal S(1,\eps) ,$$
for some $0<\eps \le \eps_0(n)$ small. Then, there exists $\rho>0$ depending on $n$ such that
$$ (x \cdot \nu - \frac \eps 2 \rho)^+ \le w \le (x \cdot \nu + \frac \eps 2 \rho)^+ \quad \mbox{in $B_\rho$},$$
for some unit direction $\nu$, $|\nu|=1$.
\end{prop}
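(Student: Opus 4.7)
The approach is the standard compactness/linearization scheme for flat one-phase problems, carried out uniformly as $s$ varies over $(-1,0)$. I would argue by contradiction: if the proposition fails, then for every candidate $\eps_0$ and $\rho$ there exist sequences $\eps_k\downarrow 0$, $s_k\in(-1,0)$ and viscosity solutions $w_k$ of \eqref{weq} with parameter $s_k$, satisfying $w_k\in\mathcal S(1,\eps_k)$, for which the conclusion fails. After a rotation I may assume the flatness direction at scale $1$ is $e_n$, i.e.
$$(x\cdot e_n-\eps_k)^+\le w_k(x)\le (x\cdot e_n+\eps_k)^+ \quad \text{in } B_1,$$
and after passing to a subsequence $s_k\to s_\infty\in[-1,0]$.

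The next step is to pass to the renormalized graphs
$$v_k(x):=\eps_k^{-1}\bigl(w_k(x)-(x\cdot e_n)^+\bigr),\qquad |v_k|\le 1,$$
and establish a Harnack-type oscillation decay for $v_k$ with constants depending only on $n$, independent of $\eps_k$ and $s_k$. This yields uniform H\"older equicontinuity on compact subsets of $B_{1/2}\cap\{x_n\ge 0\}$ (once $v_k$ has been extended through the free boundary in the usual way), hence a locally uniform subsequential limit $v_\infty$. Linearizing \eqref{weq} around the profile $w=x_n$, using $\nabla h(e_n)=-e_n$ from \eqref{nablah}, $v_\infty$ satisfies $\Delta v+s_\infty v_n/x_n=0$ in $\{x_n>0\}$ when $s_\infty\in(-1,0]$, or the limiting equation \eqref{1004} when $s_\infty=-1$. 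On $\{x_n=0\}$ a quadratic polynomial $p(x')$ touching $v_\infty$ from one side lifts to a test function $x_n+\eps_k\bigl(p(x')+\mu\,x_n^{1-s_k}\bigr)$ for $w_k$ at the corresponding free boundary point; Definition \ref{defnw} together with the expansion \eqref{1002} then forces the correct sign of the Neumann derivative in the limit, i.e.\ the condition \eqref{LE2} (or \eqref{1005} when $s_\infty=-1$, via Lemma \ref{L08}).

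Once the limit problem is identified, I would apply Proposition \ref{UE} for $s_\infty\in(-1,0]$, or Corollary \ref{C09} for $s_\infty=-1$, to obtain
$$|v_\infty(x)-v_\infty(0)-a'\cdot x'|\le C|x|^{1+\alpha}$$
with $C$, $\alpha$ depending only on $n$. Choosing $\rho$ small so that $C\rho^{1+\alpha}\le\rho/8$ and undoing the renormalization, $w_k$ is trapped in $B_\rho$ between the two translates of the one-dimensional profile in the unit direction $\nu_k$ obtained by tilting $e_n$ towards $(a',0)$ by an angle of size $O(\eps_k|a'|)$, with flatness at most $\tfrac{\eps_k}{2}\rho$. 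For $k$ large this contradicts the assumed failure.

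The hard part will be the uniform (in $s_k$) Harnack-type oscillation decay used to extract $v_\infty$: the barriers of \cite{DS1} underpinning the classical version degenerate exactly as $s\to-1$, and I expect to replace them with the logarithmic barriers $\tfrac{1}{1+s}(x_n^2-x_n^{1-s})$ introduced in the proof of Proposition \ref{UE}, combined with a slice-wise Caccioppoli/Sobolev iteration of the type performed there, to transfer interior information to the flat boundary with constants independent of $s$. A secondary delicate point is to verify that the lifted test functions remain viscosity comparison functions for \eqref{weq} in the sense of Definition \ref{defnw} uniformly in $k$, which is a direct but careful expansion argument using \eqref{1002}.
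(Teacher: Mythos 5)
Your overall compactness–linearization scheme — contradicting sequence, renormalization of $w_k-x_n$ by $\eps_k$, extraction of a limit solving the linearized Neumann problem, and application of Proposition \ref{UE}/Corollary \ref{C09} to read off the improvement — matches the paper's proof of Proposition \ref{P1}. But you have misidentified the key technical step, and your proposed fix for it is both unnecessary and substantially harder than what the paper actually does.

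You name as ``the hard part'' a uniform-in-$s$ Harnack-type oscillation decay for the nonlinear renormalized functions $v_k$, to be obtained from the logarithmic barriers $\tfrac{1}{1+s}(x_n^2-x_n^{1-s})$ plus a slice-wise Caccioppoli/Sobolev iteration. No such estimate is needed, and attempting it is where your plan would bog down. The hypothesis $w_k\in\mathcal S(1,\eps_k)$ in Definition \ref{srew} already encodes $\eps_k$-flatness at \emph{every} scale centered on the free boundary, not merely at scale $1$. Claim 1 of the paper's proof uses this directly: the flatness directions $\nu^k_m$ at dyadic scales $r_m=2^{-m}$ satisfy $|\nu^k_m-\nu^k_{m-1}|\le C\eps_k$, hence $|\nu^k_m-e_n|\le Cm\eps_k$ and $\operatorname{osc}_{B_{r_m}}\tilde w_k\le C'mr_m$ near the free boundary; for balls inside $\{w_k>0\}$ the decay comes from Lemma \ref{H}, an interior Harnack estimate in the positivity set, where the PDE is uniformly elliptic and constants are automatically uniform in $s$. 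The barriers in the proof of Proposition \ref{UE} serve the \emph{linear} Neumann problem only; promoting them to a boundary Harnack for the nonlinear free boundary problem would require controlling the free-boundary geometry itself, which is precisely the difficulty the all-scales hypothesis is designed to sidestep. So your ``hard part'' is a detour that the paper never takes, and you would need to prove a genuinely new estimate to carry it out.

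A secondary issue concerns your lifted comparison function $x_n+\eps_k\bigl(p(x')+\mu x_n^{1-s_k}\bigr)$: it is too crude to be a strict viscosity sub/supersolution for \eqref{weq} uniformly as $s_k\to-1$. The paper works instead with $\Phi=d+\eps f(d)$, where $d$ is the distance to the curved graph $x_n=-\eps p(x')$ — so the quadratic shift enters geometrically, not additively — and $f$ carries corrector terms such as $\tfrac{t}{4}d^{1-s_k}+\tfrac{t}{4}d^{1-2s_k}+Md^2$ when $\bar s\in(-1,0]$, or $\tfrac{t}{4(1+s_k)}(d^{1-s_k}-d^2)-Md^2$ when $\bar s=-1$. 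These extra terms are what make the strict-subsolution sign uniform in $k$ and produce the limiting boundary operator \eqref{1005}; finding them is the substantive computation in Cases 1 and 2 of Claim 2, not a routine expansion.
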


We remark that the Proposition \ref{P1} can be applied for all balls $B_t(y) \subset B_1$ centered on the free boundary. Then the conclusion implies that
$$ w \in \mathcal S(\rho, \frac \eps 2),$$
which is precisely the statement of Theorem \ref{TI}.

In Proposition 6.1 of \cite{DS1} we proved Proposition \ref{P1} for constants $\eps_0$ and $\rho$ that depend on $s$. Below we show that this dependence can be dropped.

After a rotation we may assume that 
\begin{equation}\label{2000}
(x_n-\eps)^+ \le w \le (x_n+ \eps)^+ \quad \mbox{in $B_1$.}
\end{equation}
As in \cite{DS1}, we consider the rescaled function
\begin{equation}\label{tidw}
 \tilde w = \frac{w-x_n}{\eps} ,
 \end{equation}
and show that is well approximated by a viscosity solution of the linearized equation
\begin{equation}\label{LiE}
\begin{cases}
\Delta \varphi + s \dfrac{ \varphi_n}{x_n}=0, \quad \text{in $B_{1}^+,$}\\
\partial_{ x_n^{1-s}} \, \varphi =0 \quad \text{on $\{x_n=0\}$}.
\end{cases}
\end{equation}

First, we need a preliminary lemma.

\begin{lem}\label{H}
Assume that $$ 0<x_n^+ \le w \le (x_n + a)^+ \quad \mbox{in} \quad B_r (x_0),$$
for some $a >0$. Then 
$$\mbox{either} \quad (x_n + c a)^+ \le w \quad \mbox{or} \quad w \le (x_n + (1-c)a)^+ \quad \mbox{in} \quad  B_{r/2} (x_0),$$
for some constant $c>0$ independent of $\gamma$.
\end{lem}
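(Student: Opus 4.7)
The plan is to carry out the standard Harnack dichotomy for one-phase free boundary problems, with barriers whose $s$-dependence is absorbed in the controlled manner used already in the proof of Proposition~\ref{UE}. After scaling and translating I may assume $x_0=0$ and $r=1$, so $x_n^+\le w\le (x_n+a)^+$ in $B_1$. The vanishing $h(e_n)=0$ (see \eqref{nablah}) implies that both $x_n$ and $x_n+a$ are classical solutions of \eqref{weq} on their positivity sets, hence $v:=w-x_n$ satisfies $0\le v\le a$ in $\{w>0\}\cap B_1$. Evaluating $v$ at the midpoint $p_0:=\tfrac12 e_n$ produces the dichotomy $v(p_0)\ge a/2$ vs.\ $v(p_0)\le a/2$; the two cases are interchangeable via the substitution $w\mapsto (x_n+a)-w$, so I focus on $v(p_0)\ge a/2$ and aim to prove $w\ge x_n+ca$ in $B_{1/2}$.

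In the interior region $\{x_n\ge 1/16\}\cap B_{3/4}$, $w$ is bounded below away from zero uniformly in $s\in(-1,0]$, so \eqref{weq} is uniformly elliptic with $s$-independent bounds; then $v$ satisfies a uniformly elliptic linear equation with bounded coefficients and the classical interior Harnack inequality yields $v\ge c_1 a$ on a smaller interior set, with $c_1=c_1(n)$.

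The main step is to propagate this bound down to and past the level $\{x_n=0\}$ with a constant independent of $s$. For this I would construct a strict subsolution $\phi$ of \eqref{weq} in a thin cylinder $B'_\rho\times[-\delta,1/8]$, of the form
$$
\phi(x)=x_n+c_1 a\left[A_0-C_0|x'|^2+\frac{C_0}{1+s}\bigl(x_n^2-x_n^{1-s}\bigr)\right]
$$
on its positivity set, with $A_0$ small, $C_0$ large, $\rho,\delta$ small (all depending only on $n$), chosen so that $\phi\le w$ on the top and lateral faces (using the interior Harnack from the previous step together with the ambient bound $w\ge x_n^+$), while $\phi$ is a strict subsolution of the equation on its positivity set and of the free boundary condition in the viscosity sense of Definition~\ref{defnw} at its own free boundary. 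The crucial $s$-uniform input, already exploited in the proof of Proposition~\ref{UE}, is the uniform convergence
$$
\frac{1}{1+s}\bigl(x_n^{1-s}-x_n^2\bigr)\longrightarrow -x_n^2\log x_n\qquad\text{on }[0,1]\ \text{as}\ s\to -1,
$$
recalled in \eqref{1002}, which keeps $\phi$ and $\nabla\phi$ bounded independently of $s$ and ensures that the strict subsolution character survives the limit. Comparison then yields $\phi\le w$ throughout the cylinder, giving $w\ge x_n+ca$ with $c=c_1A_0$; the symmetric case is handled by the analogous strict supersolution above $x_n+a$.

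The main obstacle is precisely the $s$-independence of every constant: ellipticity, interior Harnack, barrier construction, and the maximum principle comparison must all be organized so that nothing blows up as $s\to -1$. The decomposition $(x_n^{1-s}-x_n^2)/(1+s)$ is the essential device that replaces the naive barrier correction $x_n^{1-s}$, whose coefficient would otherwise degenerate in this limit.
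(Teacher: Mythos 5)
The lemma is a purely \emph{interior} Harnack estimate: the hypothesis $0<x_n^+$ forces $B_r(x_0)\subset\{x_n>0\}\subset\{w>0\}$, so the free boundary never enters the picture. You read the statement as allowing the ball to touch $\{x_n=0\}$ (``After scaling and translating I may assume $x_0=0$ and $r=1$''), which is incompatible with $0<x_n^+$, and as a result you set out to prove a boundary dichotomy with barriers. The entire second half of your argument---the strict subsolution $\phi$ in the thin cylinder $B'_\rho\times[-\delta,1/8]$, the use of the limit $\tfrac{1}{1+s}(x_n^{1-s}-x_n^2)\to -x_n^2\log x_n$, and the free-boundary comparison in the sense of Definition~\ref{defnw}---is therefore addressed to a different (and considerably harder) statement than the one at hand.

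Beyond the misreading, the route through the $w$-variable also creates an avoidable technical obstacle that the paper's proof sidesteps. Setting $v=w-x_n$ yields $\Delta v=-\tfrac{s}{w}v_n-\tfrac{s}{2w}|\nabla v|^2$, which has a \emph{quadratic} gradient term; to cast this as a linear equation with bounded drift you must first produce an $s$- and $a$-uniform interior gradient bound for $v$, and you would in any case lose the clean two-sided inequality needed to apply Harnack directly. The paper instead works with $u=c_\alpha w^\alpha$ and $u_0=c_\alpha(x_n^+)^\alpha$, where the PDE is semilinear with no gradient dependence: $g:=u-u_0\ge 0$ satisfies $|\Delta g|\le Cg$ with $C$ uniform in $\gamma$ because $u_0(x_n)$ is bounded below away from $0$ on $B_r(e_n)$, $r\le 1/2$, uniformly as $\gamma\to 2$. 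A single application of the classical Harnack inequality to $g$, followed by translating the bound $g\ge c\,g(e_n)$ back to a bound of the form $w\ge x_n+c'a$ via the concavity and scaling of $u_0$, finishes the proof. (Your symmetry remark ``the two cases are interchangeable via $w\mapsto(x_n+a)-w$'' is also not literally a symmetry of \eqref{weq}; the correct symmetric step is to run the same Harnack argument on $u_0(\cdot+a)-u$.) In short: drop the barrier step, switch to the $u$-variable, and the lemma becomes a one-paragraph interior Harnack estimate.
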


\begin{proof}It is convenient to prove this result working directly with the solution $u$ prior to applying the change of variables \eqref{change}. Then, our assumption reads:
 $$ 0<u_0(x_n) \le u \le u_0(x_n + a) \quad \mbox{in} \quad B_r (x_0).$$
After a dilation we may assume that $x_0=e_n$ and $r \le 1/2$. Assume for simplicity that 
$$u(e_n) \ge u_0(1+ \frac a 2).$$ Then $g:=u-u_0 \ge 0 $ satisfies
$$|\Delta g| \le C g \quad \mbox{in} \quad B_r(e_n).$$
and by Harnack inequality we find $$g \ge c g(e_n) \ge c(u_0(1+ \frac a2) - u_0(1))\quad \mbox{in} \quad B_{r/2}(e_n).$$ 
The conclusion follows since
$$c(u_0(1+ \frac a2) - u_0(1)) \ge u_0(x_n + c'a) - u_0(x_n) \quad \quad \forall x_n \in \left[\frac 12, \frac 32 \right],$$
provided that $c'$ is chosen sufficiently small. 
\end{proof}

\

\noindent \textit{Proof of Proposition $\ref{P1}$.}
Assume that for a sequence of $\eps_k \to 0$, $s_k \in (-1,0)$ and 
\begin{equation}\label{2001}
w_k \in \mathcal S(1,\eps_k)
\end{equation} which satisfy \eqref{weq}-\eqref{2000} the conclusion does not hold for some $\rho$ small, depending only on $n$, to be made precise later. 

By passing to a subsequence we may assume that
$$ s_k \to \bar s \in [-1,0].$$
{\it Claim 1:} Up to a subsequence, the graphs of
$$ \tilde w_k := \frac{w_k-x_n}{\eps_k} $$
defined in $ \overline{\{w_k >0\}}$, converge uniformly on compact sets to the graph of a H\"older limiting function $\bar w$  defined in $B_1 \cap \{x_n \ge 0\}$, and $$|\bar w|\le 1 \quad \bar w(0)=0.$$ 
Towards this, we notice that \eqref{2001} implies that the oscillation of $w_k$ decays geometrically in dyadic balls of radius $r_m=2^{-m}$ which are centered on the free boundary, $m \ge 1$. Indeed, if for example we focus at the origin, the unit directions $\nu^k_m$ of the linear functions which approximate $w_k$ in the balls $B_{r_m}$ satisfy
$$|\nu^k_{m}-\nu^k_{m-1}| \le C \eps \quad  \Longrightarrow \quad |\nu^k_m - e_n| \le C m \eps_k.$$
Then the inequalities
$$(x \cdot \nu^k_m - \eps_k r_m)^+ \le w_k \le (x \cdot \nu^k_m + \eps_k r_m)^+ \quad \quad \mbox{in} \quad B_{r_m},$$
 imply
 $$(x_n -(Cm +1) \eps_k r_m)^+ \le w_k \le (x_n + (Cm+1)\eps_k r_m)^+ \quad \quad \mbox{in} \quad B_{r_m},$$
 which gives
 $$ osc \, \, \tilde w_k \le C' m r_m \quad \quad \mbox{in} \quad B_{r_m} \cap \overline {\{w_k >0\}}.$$
 On the other hand, for dyadic balls included in $\{w_k>0\}$, we can apply Lemma \ref{H} and conclude that the oscillation of $\tilde w_k$ decays 
 geometrically as well. By combining these estimates we find that $\tilde w_k$ has a uniform H\"older modulus of continuity when restricted to compact sets of $B_1$, and the claim follows from Arzela-Ascoli theorem.  
 
 \
 
 {\it Claim 2:} $\bar w$ solves \eqref{LiE} in the viscosity sense, with $s=\bar s$. 
 
 If $\bar s=-1$ then the boundary condition is understood as in \eqref{1005}.
 
 \

The function $\tilde w_k$ solves the equation
$$ \Delta w_k = \frac {1}{ \eps_k} \frac{s_k \cdot h(e_n+ \eps_k \nabla \tilde w_k)}{x_n + \eps_k \tilde w_k }:= g(\eps_k,s_k,x_n, \tilde w_k,\nabla \tilde w_k)$$
and 
$$g(\eps_k,s_k,x_n, z, p) \to  \bar s \, \frac {\nabla h(e_n)  \cdot p}{x_n}=- \bar s \frac{p_n}{x_n} \quad \mbox{as $k \to \infty$.}$$
This shows that $\bar w$ solves the linear equation
\begin{equation}\label{501}
\Delta \bar w + \bar s \frac{\bar w_n}{x_n} =0 \quad \mbox{in} \quad B_1^+,
\end{equation}
in the viscosity sense. 

 It remains to show that $\bar w$ satisfies the boundary condition of \eqref{LiE} on $\{x_n=0\}$. 
 
 Towards this aim we construct explicit barriers. Let $p(x')$ be a given quadratic polynomial and let $d$ denote the distance to the graph $$x_n = -\eps p(x').$$
 We restrict our computations to the region
$$ B_1 \cap \{x_n \ge - \eps p(x') \},$$
and we have
 $$d=x_n + \eps p(x') + O(\eps^2),$$ 
 $$ \Delta d = \eps \kappa + O(\eps^2), \quad \quad \kappa=\Delta_{x'} p.$$
We let
\begin{equation}\label{2002}
\Phi:= d + \eps f(d),
\end{equation} 
for some one-dimensional Lipschitz function $f$, $f(0)=0$ to be specified below, and compute
\begin{align*}
\Delta \Phi  &= \eps f''(d)+(1+ \eps f'(d))\Delta d \\
&= \eps \left ( f''(d)+ \kappa + O(\eps)     \right),
\end{align*}
and
\begin{align*}
\frac{h_s(\nabla \Phi)}{\Phi} &=\frac s 2 \cdot \frac{-2\eps f'(d)+\eps^2 (f'(d)^2)}{d+\eps f(d)}\\
&= - \eps \, s \, \frac{f'(d)}{d} \left(1+ \eps O(|f'| + |\frac f d|)\right).
\end{align*}
The corresponding function $\tilde \Phi$ (see \eqref{tidw}) has the form 
\begin{equation}\label{2003}
\tilde \Phi = p(x') + f(x_n)+ O(\eps).
\end{equation}

We distinguish 2 cases, $\bar s \in (-1,0]$ and $\bar s=-1$.

\smallskip

{\it Case 1: $\bar s \in (-1,0]$.}

Assume by contradiction that, say for simplicity, $\bar w$ is touched strictly by below at $0$ by 
$$q(x):= - a |x'|^2 + t x^{1-\bar s}_n  ,$$
for some constants $a$, $t>0$. Then we pick $p(x')=- \frac a 2 |x'|^2$ and in \eqref{2002} we make the choice
$$f_k(d)= \frac t 4 d^{1-s_k} + \frac t 4  d^{1-2s_k} + M d^2,$$
for some large $M>0$. 

Then $\Phi_k$ is a strict viscosity subsolution since the free boundary condition in Definition \ref{defnw} is clearly satisfied by the choice of $f_k$, and the computations above imply that
$$
\Delta \Phi_k - \frac{h_{s_k}(\nabla \Phi_k)}{\Phi_k} = $$
$$ =\eps_k \left( \frac t 4|s_k| (1-2 s_k + O(\eps_k)) d^{-1-2 s_k}  + 2M (1+s_k) - (n-1)a + O(\eps_k)\right) >0,$$
for all large $k$'s.  

From \eqref{2003} we get the uniform convergence 
$$\tilde \Phi_k \to - \frac a 2 |x'|^2 + \frac t 4 d^{1-\bar s} + \frac t 4  d^{1-2 \bar s} + M d^2,$$
and the limit function, in a small neighborhood of $0$, touches $q$ and therefore $\bar w$ strictly by below at the origin.
This means that a translation of the graph of $\Phi_k$ in $\overline {\{\Phi_k>0\}}$ touches the graph of $w_k$ in $\overline {\{w_k>0\}}$ at an interior point and we reached a contradiction.

\smallskip

{\it Case 2: $\bar s=-1$.} 

Assume that we can touch $\bar w$ on $\{x_n=0\}$ by a quadratic polynomial $p(x')$ strictly by below at $0$, with a quadratic polynomial $p(x')$ with $$\Delta_{x'} p=t>0.$$ Then, we can touch $\bar w$ strictly by below at $0$ in $\overline{B_\delta^+}$ with
$$ q(x)= p(x') + \frac t 4 x_n^2 |\log x_n| - M x_n^2,$$
for some $M$ sufficiently large. This follows from the fact that $q$ is a subsolution of the equation \eqref{501} with $\bar s=-1$. 

We choose 
$$f_k(d)= \frac t 4 \cdot \frac{1}{1+s_k} (d^{1-s_k} - d^2) - M d^2   $$
and the corresponding function $\Phi_k$ is a viscosity subsolution since
$$
\Delta \Phi_k - \frac{h_{s_k}(\nabla \Phi_k)}{\Phi_k} = $$
$$ =\eps_k \left( - \frac t 2 - 2M (1+s_k) + t + O(\eps_k)d^{-1-2s_k}\right) >0,$$
for all large $k$'s. Notice that $\tilde \Phi_k \to q$ uniformly in $B_1^+$ and we reach a contradiction as in Step 1. With this {\it Claim 2} is proved.

\

Next we apply Proposition \ref{UE} and Corollary \ref{C09} to $\bar w$ and conclude that
$$|\bar w-a' \cdot x'| \le \frac{\rho}{8} \quad \mbox{in} \quad \overline {B}_\rho^+,$$
for some $\rho>0$ universal depending only on $n$. This implies
$$\left(x_n + \eps_k (a' \cdot x' - \frac \rho 4)\right)^+ \le w_k \le \left(x_n + \eps_k (a' \cdot x' + \frac \rho  4)\right)^+  \quad \mbox{in} \quad \overline B_\rho^+,$$
holds for large $k$'s. Then the conclusion is satisfied for $w_k$ with $\nu_k = \frac{e_n + \eps_k a'}{|e_n + \eps_k a'|}$ and we reached a contradiction.
\qed

\section{Proof of Theorem \ref{TM}}

In this final section we provide the proof of our main result, Theorem \ref{TM}. For the case when $\gamma \to 2$, it will follow from the characterization of global minimizers of the Dirichlet-perimeter functional $\mathcal F$ introduced in Section 2, combined with the compactness Theorem \ref{TMDS3}, and the uniform improvement of flatness result in the previous section. 

Here, we need to characterize the global minimizers of $\mathcal{F}$ and deduce the flatness of global minimizers of $\mathcal{E_\gamma}$.
We start with an important tool, that is a monotonicity formula for the Dirichlet-perimeter functional $\mathcal F$.

\begin{prop}[Monotonicity Formula]\label{WMF}
Let $(u,E)$ be a minimizing pair for $\mathcal F$ in $\Omega$. Then
$$ \Phi(r)=r^{1-n} \mathcal F_{B_r}(u,E) - \frac 12 r^{-n} \int_{\p B_r} u^2 d \sigma$$
is monotone increasing in $r$ as long as $B_r \subset \Omega$. 

Moreover, $\Phi$ is constant if and only if $(u,E)$ is a cone i.e $u$ is homogenous of degree $1/2$ and $E$ is homogenous of degree $0$.

\end{prop}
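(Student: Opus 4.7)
My plan is to prove this as a Weiss-type monotonicity formula, combining a direct differentiation of $\Phi$ with a cone-comparison obtained from minimality. The scaling that governs the problem is $v_r(x):=r^{-1/2}u(rx)$, $E_r:=r^{-1}E$, under which
$$\Phi(r)=\mathcal{F}_{B_1}(v_r,E_r)-\frac{1}{2}\int_{\partial B_1}v_r^2\,d\sigma.$$
This identity already explains why the claimed homogeneity degrees ($1/2$ for $u$ and $0$ for $E$) force $\Phi$ to be constant automatically: in that case $v_r\equiv u$ and $E_r\equiv E$ for every $r$. Since $(v_r,E_r)$ is again a minimizer, it suffices to prove $\Phi'(1)\ge 0$ and then rescale.

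First I would differentiate $\Phi$ at $r=1$ to express $\Phi'(1)$ in terms of boundary quantities on $\partial B_1$. The three contributions are handled by the fundamental theorem of calculus for the Dirichlet term, the coarea formula for Caccioppoli sets (which gives $\tfrac{d}{dr}P_{B_r}(E)=\mathcal{H}^{n-2}(\partial^{*}E\cap\partial B_r)$ at a.e.\ $r$), and polar decomposition for the boundary-sphere integral. The resulting expression takes the form
$$\Phi'(1)=(1-n)\mathcal{F}_{B_1}(u,E)+\int_{\partial B_1}|\nabla u|^2\,d\sigma+\mathcal{H}^{n-2}(\partial^{*}E\cap\partial B_1)+\tfrac{1}{2}\int_{\partial B_1}u^2\,d\sigma-\int_{\partial B_1}u\,u_\nu\,d\sigma.$$
The term $(1-n)\mathcal{F}_{B_1}(u,E)$ has the wrong sign and must be controlled from below via minimality.

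For this I would use the natural cone competitor $(u_c,\tilde E)$ in $B_1$, with $u_c(\rho\theta):=\rho^{1/2}u(\theta)$ the $1/2$-homogeneous extension of the trace $u|_{\partial B_1}$, and $\tilde E$ the cone over $E\cap\partial B_1$. Since the traces on $\partial B_1$ match those of $(u,E)$, the pair extended by $(u,E)$ outside $B_1$ is an admissible global competitor; in particular $u_c=0$ on $\tilde E$ because $u|_{\partial B_1}=0$ on $E\cap\partial B_1$. A direct polar-coordinate computation gives
$$\mathcal{F}_{B_1}(u_c,\tilde E)=\frac{1}{n-1}\left(\tfrac{1}{4}\int_{\partial B_1}u^2+\int_{\partial B_1}|\nabla_T u|^2+\mathcal{H}^{n-2}(\partial^{*}E\cap\partial B_1)\right).$$
Plugging the resulting upper bound on $\mathcal{F}_{B_1}(u,E)$ into the formula for $\Phi'(1)$ and decomposing $|\nabla u|^2=|\nabla_T u|^2+u_\nu^2$ on $\partial B_1$, the tangential and perimeter contributions cancel and the remainder collapses into the complete square
$$\Phi'(1)\ge \int_{\partial B_1}\left(u_\nu-\tfrac{u}{2}\right)^2 d\sigma\ge 0.$$

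Rescaling yields $\Phi'(r)\ge r^{1-n}\int_{\partial B_r}(u_\nu-u/(2r))^2\,d\sigma\ge 0$ for a.e.\ $r$, and the monotonicity at every $r$ follows by continuity. In the equality case the square vanishes a.e., so $x\cdot\nabla u=u/2$ and $u$ is $1/2$-homogeneous by Euler's identity; simultaneously, the cone comparison is saturated, so $(u,E)$ coincides with its own cone competitor on every $B_r$, and $E$ is $0$-homogeneous. The converse is immediate from the scale-invariant identity for $\Phi$. The main technical hurdle I anticipate is the a.e.\ regularity needed to make sense of $\partial^{*}E\cap\partial B_r$, the trace $u|_{\partial B_r}$, and $\nabla_T u$ on $\partial B_r$, and to ensure admissibility of the cone competitor; this comes from combining the trace theorem for $H^1(\Omega)$ with the coarea formula applied to $|\nabla u|^2$ and $\chi_E$, so that a.e.\ radius is \emph{good}.
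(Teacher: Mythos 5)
Your proposal follows essentially the same strategy as the paper's proof: both compute $\Phi'$ in polar form, introduce the homogeneous cone extension $(\tilde u,\tilde E)$ of the boundary data on $\partial B_r$, and use minimality $\mathcal F_{B_r}(u,E)\le\mathcal F_{B_r}(\tilde u,\tilde E)$ to control the term $(1-n)\mathcal F_{B_r}(u,E)$. The only presentational difference is that you compute $\mathcal F_{B_1}(u_c,\tilde E)$ explicitly and substitute, while the paper phrases the same algebra as $\Phi'(r)\ge\tilde\Phi'(r)=0$, reading off $\tilde\Phi'(r)=0$ from the scale invariance of the homogeneous pair; after expanding the complete square, these are identical computations.

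One substantive inaccuracy worth fixing: the coarea derivative of the perimeter at a.e.\ $r$ is not $\mathcal H^{n-2}(\partial^{*}E\cap\partial B_r)$ but
$$\frac{d}{dr}P_{B_r}(E)=\int_{\partial^{*}E\cap\partial B_r}(\sin\theta)^{-1}\,d\mathcal H^{n-2},$$
where $\theta$ is the angle between the normal to $\partial^{*}E$ and the radial direction (the tangential gradient of $|x|$ along $\partial^{*}E$ has norm $|\sin\theta|$). Since $(\sin\theta)^{-1}\ge 1$, your underestimate is harmless for the monotonicity inequality $\Phi'\ge 0$. It does, however, matter for the equality characterization: with the correct term present, $\Phi'\equiv 0$ forces both $(u_\nu-u/(2r))^2$ to vanish and $\sin\theta\equiv 1$ on $\partial^{*}E\cap\partial B_r$, and the latter says directly that $\partial^{*}E$ is radially ruled, hence a cone, with no further input. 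Your version instead deduces $0$-homogeneity of $E$ from saturation of the cone comparison, which implicitly requires that a minimizer attaining equality against its cone competitor actually coincides with it --- a uniqueness-type assertion that needs its own justification. Restoring the $(\sin\theta)^{-1}$ factor, as in the paper's $\Phi'$ formula, makes this last step immediate.
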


\begin{proof}
We compute for a.e. $r$

\begin{align*}
 \Phi'(r) & =  r^{1-n} \left(\int_{\p B_r} |\nabla_\theta u|^2  + \frac 14 \frac {u^2}{r^2}  d \sigma - \frac{n-1}{r} \mathcal F_{B_r}(u,E) \right. \\
 & + \left.  \int_{\p B_r} (u_\nu - \frac 12 \frac ur)^2 d\sigma  + \int_{\p E \cap \p B_r} (\sin \theta)^{-1} d \mathcal H^{n-2}  \right)
 \end{align*}
 where $\theta$ is the angle between the normal $\nu$ to $\p E$ and the radial direction $x/|x|$.

 Let $(\tilde u, \tilde E)$ be the extension of the boundary data of $(u,E)$ on $\p B_r$ to $\R^n$, with $\tilde u$ homogenous of degree $1/2$, and $\tilde E$ homogeneous of degree $0$. 

Denote by $\tilde \Phi$ the corresponding expression for the pair $(\tilde u, \tilde E)$. The homogeneity of the pair implies that $\tilde \Phi$ is constant in its argument and the computation above shows that
 $$\tilde \Phi'(r)= r^{1-n} \left(\int_{\p B_r} |\nabla_\theta \tilde u|^2  + \frac 14 \frac {\tilde u^2}{r^2}  d \sigma - \frac{n-1}{r} \mathcal F_{B_r}(\tilde u,\tilde E) + \mathcal H^{n-2} (\p \tilde E \cap \p B_r) \right).$$
 Now the conclusion $$\Phi'(r) \ge \tilde \Phi'(r)=0$$ follows since $(u,E)$ and $(\tilde u, \tilde E)$ coincide on $\p B_r$ and $$  \mathcal F_{B_r}(\tilde u,\tilde E)\ge \mathcal F_{B_r}(u,E)$$ by minimality of $(u,E)$. 
\end{proof}

We can now easily deduce the following result.

\begin{prop}\label{P11}
If $(u,E)$ is a cone, then $u \equiv 0$ and $E$ is a minimizing cone for the perimeter.
\end{prop}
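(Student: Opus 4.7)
\textit{Plan.} The strategy is to prove $u\equiv 0$ first; the minimality of $E$ as a perimeter cone follows at once.

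Write $u(x)=|x|^{1/2}\phi(x/|x|)$ and $A:=E\cap \mathbb{S}^{n-1}$, so that $E$ is the cone over $A$ and $\phi\geq 0$ vanishes on $A$. The interior Euler--Lagrange equation for $\mathcal{F}$ forces $u$ to be harmonic in $\{u>0\}$; by the homogeneity of $u$, this means $\phi$ is a Dirichlet eigenfunction of $-\Delta_{\mathbb{S}^{n-1}}$ on $\mathbb{S}^{n-1}\setminus A$ with eigenvalue $(2n-3)/4$.

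\textit{Step 1: $u\equiv 0$.} I would construct an admissible competitor $(w,F)$ that ``releases'' the constraint $u=0$ on $E$ inside a small ball. Fix $\rho>0$ and take $F:=E\setminus\overline{B_\rho}$, $w:=u$ on $B_1\setminus B_\rho$, and $w:=h_\rho$ on $B_\rho$, where $h_\rho$ is the harmonic extension of $u\big|_{\partial B_\rho}$ into $B_\rho$ subject to no further constraint. Admissibility follows because $F\cap B_\rho=\emptyset$ and $w=u=0$ on $F$ outside $B_\rho$, while the pair agrees with $(u,E)$ on $\partial B_1$. By the conical scaling, the Dirichlet deficit
$$\int_{B_\rho}|\nabla u|^2 - \int_{B_\rho}|\nabla h_\rho|^2 = \int_{B_\rho}|\nabla(u-h_\rho)|^2$$
and the perimeter change $P_{B_1}(F)-P_{B_1}(E)$ both scale as $\rho^{n-1}$, so minimality of $(u,E)$ yields a $\rho$-independent inequality. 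I would pair this with the complementary ``grow $E$'' competitor $(v,E\cup\overline{B_\rho})$, where $v$ is an optimal radial cutoff of $u$ vanishing on $B_\rho$; the resulting second inequality likewise scales as $\rho^{n-1}$. Since both must hold for \emph{every} $\rho\in(0,1)$ with matched scaling, no slack is available: combining them via the spherical identity $\mathcal{H}^{n-1}(A)+\mathcal{H}^{n-1}(\mathbb{S}^{n-1}\setminus A)=\mathcal{H}^{n-1}(\mathbb{S}^{n-1})$ forces the Dirichlet deficit to vanish, i.e.\ $u$ is weakly harmonic on $B_\rho$. But a $1/2$-homogeneous harmonic function on a ball through the origin must be identically zero (smoothness at $0$ is incompatible with the $r^{1/2}$ profile unless $\phi\equiv 0$), so $u\equiv 0$.

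\textit{Step 2: $E$ is a minimizing cone.} With $u\equiv 0$, the pair $(0,E)$ is the minimizer. For any Caccioppoli set $\tilde E$ agreeing with $E$ outside a ball $B_R$, the pair $(0,\tilde E)$ is admissible and coincides with $(u,E)=(0,E)$ outside $B_R$; minimality therefore gives $P_{B_R}(E)\le P_{B_R}(\tilde E)$, so $E$ is a minimizing cone for the perimeter.

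\textit{Main obstacle.} The crux is Step 1. The scale-invariance of the cone makes the Dirichlet energy and the perimeter term both scale like $\rho^{n-1}$, so a single competitor produces only an inequality rather than a contradiction. Closing the argument requires carefully pairing the shrink-$E$ and grow-$E$ competitors so that, once the conical isoperimetric identity on $\mathbb{S}^{n-1}$ is imposed, no nontrivial $\phi$ remains admissible.
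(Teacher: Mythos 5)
Your proposal does not close. In Step~1 you derive, via the ``shrink-$E$'' competitor, an inequality of the form
$$\int_{B_\rho}|\nabla(u-h_\rho)|^2 \;\le\; \rho^{n-1}\bigl[\mathcal H^{n-1}(A) - P_{B_1}(E)\bigr],$$
and a second one from the ``grow-$E$'' competitor involving $\mathcal H^{n-1}(\mathbb S^{n-1}\setminus A)$. Because the cone is scale-invariant, both sides of each inequality scale identically in $\rho$, so letting $\rho\to 0$ yields no information beyond a single $\rho$; you acknowledge this yourself. The crucial claim --- that combining the two inequalities via $\mathcal H^{n-1}(A)+\mathcal H^{n-1}(\mathbb S^{n-1}\setminus A)=\mathcal H^{n-1}(\mathbb S^{n-1})$ forces the Dirichlet deficits to vanish --- is asserted, not proved, and it is not true as stated: adding the two inequalities gives an upper bound involving $\mathcal H^{n-1}(\mathbb S^{n-1}) - 2P_{B_1}(E)$, which is not sign-definite (the lateral perimeter $P_{B_1}(E)=\tfrac{1}{n-1}\mathcal H^{n-2}(\partial_{\mathbb S^{n-1}}A)$ bears no a~priori relation to half the measure of the sphere), so there remains ample ``slack'' and no contradiction is reached. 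Your ``Main obstacle'' paragraph in effect concedes that the argument is not completed, which makes Step~1 a genuine gap rather than a sketch.

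The paper's proof avoids all of this by invoking Theorem 4.1 of \cite{ACKS}, which gives \emph{interior Lipschitz regularity} of $u$ for any minimizing pair $(u,E)$. A function on $\R^n$ that is simultaneously Lipschitz near the origin and positively homogeneous of degree $1/2$ must be identically zero, since otherwise $|\nabla u(x)|\sim |x|^{-1/2}$ blows up at $0$. This gives $u\equiv 0$ immediately, after which your Step~2 (which is correct) shows $E$ is a perimeter-minimizing cone. If you wish to keep a self-contained competitor argument, you would need an additional quantitative ingredient --- e.g.\ the ACKS density estimates or the Lipschitz bound itself --- since pure scaling of cone competitors cannot distinguish $u\equiv 0$ from $u\not\equiv 0$.

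Two small side remarks: (i) the eigenvalue you quote for a $1/2$-homogeneous harmonic function on $\mathbb S^{n-1}$ should be $\tfrac12(\tfrac12+n-2)=\tfrac{2n-3}{4}$, which you have right, but this fact is not actually used in your argument; (ii) in the grow-$E$ competitor, the energy cost of the radial cutoff $\int u^2|\nabla\eta|^2$ does scale as $\rho^{n-1}$ as you say, but ``optimal radial cutoff'' needs a precise definition before the resulting inequality can be compared to the shrink-$E$ one.
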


\begin{proof}
In Theorem 4.1 of [ACKS] it was shown that $u$ is Lipschitz in the interior of the domain for a minimizing pair $(u,E)$. Since $u$ is homogenous of degree 1/2, this means that $ u =0$, and $E$ is a minimal cone for the perimeter.
\end{proof}

Next, we can characterize global minimizers to $\mathcal F$ in low dimensions, on the basis of the classical regularity theory for minimal surfaces (see for example \cite{G}).

\begin{prop}\label{P1'}
Assume $n \le 7$ and $(u,E)$ is a global minimizer for $\mathcal F$ with $0 \in \p E$. Then $u \equiv 0$ and $E$ is a half-space.
\end{prop}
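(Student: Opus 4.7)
My plan is to run a combined blow-up and blow-down argument powered by the monotonicity formula of Proposition \ref{WMF}, the cone rigidity from Proposition \ref{P11}, and the Bombieri--De Giorgi--Giusti classification of perimeter-minimizing cones in $\mathbb R^n$ for $n\le 7$ (see \cite{G}).

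I first introduce the scale-invariant rescalings
\[
u_r(x):=r^{-1/2}u(rx),\qquad E_r:=r^{-1}E,
\]
under which $\mathcal F_{B_s}(u_r,E_r)=r^{1-n}\mathcal F_{B_{rs}}(u,E)$, so each $(u_r,E_r)$ is a global minimizer of $\mathcal F$ and the monotone quantity of Proposition \ref{WMF} transforms as $\Phi^{u_r,E_r}(s)=\Phi^{u,E}(rs)$. By standard compactness for minimizers of the Dirichlet-perimeter functional (as in \cite{ACKS}), after passing to subsequences I can extract limits $(u_0,E_0)$ as $r\to 0^+$ and $(u_\infty,E_\infty)$ as $r\to+\infty$, each of which is a global minimizer of $\mathcal F$. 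The identity above, together with the monotonicity of $\Phi^{u,E}$, forces both $\Phi^{u_0,E_0}$ and $\Phi^{u_\infty,E_\infty}$ to be constant in $s$; the equality case of Proposition \ref{WMF} then identifies each limit as a cone. Proposition \ref{P11} thus yields $u_0\equiv u_\infty\equiv 0$, with $E_0$ and $E_\infty$ perimeter-minimizing cones. Standard perimeter density estimates---propagated from $0\in\partial E$ through the $L^1_{\rm loc}$ convergence---ensure $0\in\partial E_0$ and $0\in\partial E_\infty$, so neither cone is trivial; Simons' theorem in dimensions $n\le 7$ then forces both to be half-spaces.

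A direct computation of $\Phi$ on a half-space cone with $u\equiv 0$ gives the constant value $\omega_{n-1}:=\mathcal H^{n-1}(\{x_n=0\}\cap B_1)$. Hence $\Phi^{u,E}(0^+)=\Phi^{u,E}(\infty)=\omega_{n-1}$, and since $\Phi^{u,E}$ is monotone increasing by Proposition \ref{WMF}, it is in fact constant on $(0,\infty)$. The equality case of Proposition \ref{WMF} applied now to $(u,E)$ itself shows that $(u,E)$ is a cone, and the earlier chain---Proposition \ref{P11} followed by the $n\le 7$ classification of minimal cones---concludes that $u\equiv 0$ and $E$ is a half-space.

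The step that requires the most care is the compactness of the blow-up/blow-down sequences: one must verify that the rescaled pairs converge as genuine minimizers of $\mathcal F$, with no loss of energy across concentric spheres, so that the value of $\Phi$ at the limit equals the limit of the $\Phi$-values (in particular, that the boundary term $\int_{\partial B_s}u_r^2$ passes to the limit). This is standard for functionals of Alt-Caffarelli type---perimeter lower semicontinuity for $E_r$ paired with strong $H^1_{\rm loc}$ convergence of $u_r$ tested against suitable competitors---and is available in \cite{ACKS}, but it is essentially the only point where an ingredient beyond Propositions \ref{WMF}--\ref{P11} and the classification of minimal cones enters.
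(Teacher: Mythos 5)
Your argument is correct and follows essentially the same route as the paper: blow-up and blow-down via the monotonicity formula, classification of the resulting cones through Proposition \ref{P11} together with Simons' theorem for $n\le 7$, equality of the $\Phi$-values at $0^+$ and $\infty$, and the rigidity in the equality case of Proposition \ref{WMF}. You simply supply more detail (the exact rescaling, the nontriviality of the limiting cones via density estimates, the explicit value $\omega_{n-1}$) than the paper's three-sentence proof, but the structure and ingredients are the same.
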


\begin{proof}
If $n \le 7$ then, by Proposition \ref{P11} and Simons theorem for minimal surfaces, there is only one cone up to rotations i.e. $u \equiv 0$ and $E$ is a half-space. This means that the tangent cone at infinity and the tangent cone at $0$ for the pair $(u,E)$ have the same $\Phi$ value. This means that $\Phi$ is constant and $(u,E)$ is a cone.
\end{proof}

We deduce the following flatness property for the free boundaries of global minimizers of $J_\gamma$, as $\gamma \to 2.$

\begin{prop}\label{P2}
Assume $n \le 7$. Given $\eps>0$, there exist $R$ large and $\delta >0$ small depending on $\eps$ and $n$, such that if $u$ is a minimizer of $J_\gamma$ in $B_R$, and $0 \in \p \{u>0\}$, $\gamma \ge 2-\delta,$ then, up to rotations,
$$\{x_n \ge  \eps\} \cap B_1 \subset  \{u>0 \} \cap B_1 \subset \{x_n \ge - \eps\}.$$
\end{prop}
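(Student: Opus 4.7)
The plan is to argue by contradiction using the Gamma-convergence compactness in Theorem \ref{TMDS3} together with the rigidity of global minimizers of $\mathcal F$ proved in Proposition \ref{P1'}. Suppose the statement fails. Then there exist $\eps_0>0$, sequences $R_k\to\infty$, $\delta_k\to 0^+$, exponents $\gamma_k\in[2-\delta_k,2)$, and minimizers $u_k$ of $J_{\gamma_k}$ in $B_{R_k}$ with $0\in\p\{u_k>0\}$, such that for no unit vector $\nu$ does the sandwich
$$\{x\cdot\nu\ge\eps_0\}\cap B_1\subset\{u_k>0\}\cap B_1\subset\{x\cdot\nu\ge-\eps_0\}$$
hold.

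First I would extract a global limiting pair. Thanks to the uniform (in $\gamma$) local $L^\infty$ and energy bounds for minimizers of $J_\gamma$ developed in \cite{DS3}, on each fixed ball $B_M\subset\R^n$ we have $\|u_k\|_{L^2(B_M)}+J_{\gamma_k}(u_k,B_M)\le C(M,n)$ once $R_k\ge 2M$. Applying Theorem \ref{TMDS3} on $B_M$ for an exhausting sequence of radii $M\to\infty$ and passing to a diagonal subsequence, we obtain $(u,E)\in\mathcal A(\R^n)$ with
$$u_k\to u\text{ in }L^2_{\text{loc}}(\R^n),\qquad \chi_{\{u_k>0\}}\to\chi_{E^c}\text{ in }L^1_{\text{loc}}(\R^n),$$
such that $(u,E)$ minimizes $\mathcal F$ on every bounded subdomain, i.e.\ is a global minimizer of $\mathcal F$. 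Moreover the same theorem gives uniform Hausdorff convergence $\p\{u_k>0\}\to\p E$ on compact sets, so from $0\in\p\{u_k>0\}$ we get $0\in\p E$.

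Next I would apply Proposition \ref{P1'}: since $n\le 7$ and $(u,E)$ is a global minimizer of $\mathcal F$ with $0\in\p E$, we have $u\equiv 0$ and $E$ is a half-space. After a rotation we may assume $E=\{x_n\le 0\}$, so $\p E\cap\overline{B_1}=\{x_n=0\}\cap\overline{B_1}$. The Hausdorff convergence of the free boundaries in $\overline{B_1}$ then forces $\p\{u_k>0\}\cap\overline{B_1}\subset\{|x_n|\le\eps_0\}$ for all $k$ large. Combined with the $L^1$ convergence $\chi_{\{u_k>0\}}\to\chi_{\{x_n>0\}}$ and the connectedness of the two strips $\{x_n>\eps_0\}\cap B_1$ and $\{x_n<-\eps_0\}\cap B_1$, this upgrades to the inclusion $\{x_n\ge\eps_0\}\cap B_1\subset\{u_k>0\}\cap B_1\subset\{x_n\ge-\eps_0\}$ for all large $k$, contradicting the choice of $u_k$.

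The main obstacle is really upstream of this argument: one must know that the energies $J_{\gamma_k}(u_k,B_M)$ are uniformly bounded in $\gamma_k\to 2$, which is precisely what the rescaling in \eqref{Jf} and the uniform estimates from \cite{DS3} were designed to achieve. Given those inputs, the compactness plus the Simons-type rigidity contained in Proposition \ref{P1'} makes the contradiction argument straightforward; the only genuine check is that the Hausdorff-plus-$L^1$ convergence actually delivers the two-sided sandwich, which follows because $E^c=\{x_n>0\}$ has no other component inside $B_1$ on either side of the flat free boundary.
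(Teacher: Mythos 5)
Your proposal is correct and follows essentially the same route as the paper: extract a subsequential limit via the Gamma-convergence compactness of Theorem \ref{TMDS3}, note that the limit is a global minimizing pair $(u,E)$ of $\mathcal F$ with $0\in\p E$, invoke the classification in Proposition \ref{P1'} (the paper's citation of Proposition \ref{P1} here is a typo for \ref{P1'}) to conclude $u\equiv 0$ and $E$ is a half-space, and then transfer this flatness back to $u_k$ via the uniform Hausdorff convergence of the free boundaries. You have merely made explicit two points the paper leaves implicit, namely the diagonal/exhaustion step to pass from local to global minimality of the limit pair and the observation that Hausdorff plus $L^1$ convergence yields the two-sided inclusion.
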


\begin{proof}
This follows easily by compactness from Theorem \ref{TMDS3}. Indeed, for a sequence of $\gamma_m \to 2^-$ and minimizers $u_m$ defined in $B_m$ for $\mathcal E_{\gamma_m}$, we can extract a subsequence so that the free boundaries $\p \{u_m>0\}$ converges uniformly on compact sets to $\p E$ for some global minimizing pair $(u,E)$ of $\mathcal F$. Proposition \ref{P1} implies that $u_m$ satisfies the conclusion for all large $m$'s.
\end{proof}

Finally, we also  obtain the flatness of global minimizers of $\mathcal E_\gamma$, as $\gamma \to 2$.

\begin{lem}\label{LF}
Assume $n \le 7$, and that $u$ is a global minimizer of $\mathcal E_\gamma$, $\gamma \ge 2 - \delta$, with $\delta$ as in Proposition $\ref{P2}$. Then,
$$(1-C \eps) u_0(d(x)) \le u(x) \le (1 + C \eps) u_0(d(x)),$$
with $C$ depending only on $n$, and $d(x):= dist(x, F(u))$.
\end{lem}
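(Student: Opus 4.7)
The plan is to combine Proposition \ref{P2} with the scale invariance of global minimizers of $\mathcal E_\gamma$ to obtain flatness of the positivity set at every scale and every free boundary point, upgrade this set-flatness to the pointwise $\mathcal S$-class sandwich for $u$ itself, and then evaluate $u(x)$ via this sandwich at scale comparable to $d(x)$ centered at the nearest free boundary point.

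First I would note that the rescalings $u_R(y):=R^{-\alpha}u(x^*+Ry)$, for $x^*\in F(u)$ and $R>0$, are themselves global minimizers of $\mathcal E_\gamma$. Passing to the associated $J_\gamma$ minimizer through the constant multiplier $c_\gamma^{1/(\gamma+2)}$ and invoking Proposition \ref{P2} (in a sufficiently large ball, available because $u$ is global), for every $x^*\in F(u)$ and every $R>0$ there is a unit direction $\nu=\nu_{x^*,R}$ with
\[
\{(z-x^*)\cdot\nu\ge\eps R\}\cap B_R(x^*)\;\subset\;\{u>0\}\cap B_R(x^*)\;\subset\;\{(z-x^*)\cdot\nu\ge -\eps R\}.
\]
I would then upgrade this set-flatness to the pointwise sandwich $u\in\mathcal S(R,C\eps)$ at $x^*$ for a universal $C=C(n)$. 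The upgrade combines the optimal growth and nondegeneracy $u\asymp u_0(d)$ from \cite{DS2,DS3} (which pins down the scale of $u$) with a further compactness argument along the lines of Proposition \ref{P2}: if the sandwich failed with a universal constant along a sequence $(u_k,\gamma_k)$ with $\eps_k$-flat free boundaries, Theorem \ref{TMDS3} together with Proposition \ref{P1'} would produce a global $\mathcal F$-minimizer distinct from the model $(0,\{x_n\le 0\})$, yielding a contradiction.

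With the sandwich available at every scale, fix $x\in\{u>0\}$, set $d:=d(x,F(u))$, and choose $x^*\in F(u)$ a closest point, so $x=x^*+d\hat v$ for a unit vector $\hat v$. Applying the $\mathcal S$-sandwich at $x^*$ with scale $R=2d$ and taking the sub-ball $B_d(x^*)$ of Definition \ref{sre}, I obtain a direction $\nu$ with
\[
u_0((y-x^*)\cdot\nu-C\eps d)\;\le\;u(y)\;\le\;u_0((y-x^*)\cdot\nu+C\eps d),\qquad |y-x^*|\le d.
\]
The interior of $B_d(x)$ is free of $F(u)$ points, while $F(u)\cap B_d(x^*)$ sits in the slab $\{|(z-x^*)\cdot\nu|\le C\eps d\}$; a short geometric computation near $x^*$ forces $\sin\theta=O(\eps)$ for $\theta=\measuredangle(\hat v,\nu)$, so $(x-x^*)\cdot\nu=d\cos\theta=d(1+O(\eps))$. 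Evaluating the sandwich at $y=x$ and invoking the $\alpha$-homogeneity of $u_0$, so that $u_0(d(1\pm O(\eps)))=(1\pm O(\eps))u_0(d)$, gives the claim with $C=C(n)$.

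The hard part will be the upgrade step: Proposition \ref{P2} only controls the set $\{u>0\}$, whereas the conclusion demands a sharp pointwise estimate on the values of $u$ itself. Bridging the two requires both the nondegeneracy and optimal growth from \cite{DS2,DS3}, which fix the scale of $u$, and the $\mathcal F$-compactness of Theorem \ref{TMDS3} together with the classification in Proposition \ref{P1'}, which pin down the shape. Once the sandwich is in place, the final evaluation is essentially a geometric exercise using the homogeneity of $u_0$.
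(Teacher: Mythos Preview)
Your overall plan---use Proposition~\ref{P2} plus scaling to get set-flatness at all scales, upgrade to the pointwise $\mathcal S$-sandwich, then evaluate at the nearest free-boundary point---is the right architecture, and the final evaluation step is fine. The problem is the ``upgrade'' step, which is the heart of the lemma and does not follow from the compactness you invoke.

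The difficulty is that the constant $C$ must be \emph{independent of $\gamma$} as $\gamma\to 2$. Your compactness argument runs a sequence $(u_k,\gamma_k)$ through Theorem~\ref{TMDS3}, but in the $J_\gamma$ normalization both $u_k$ and the comparison profiles $u_{0,k}$ converge uniformly to $0$ (Proposition~\ref{P1'} says the limit pair is $(0,\text{half-space})$). Thus the $\Gamma$-limit sees only the location of the free boundary, not the ratio $u_k/u_{0,k}(d)$; failure of the sandwich $u_0(x\cdot\nu-C\eps)\le u\le u_0(x\cdot\nu+C\eps)$ is invisible in the limit and cannot produce a contradiction with Proposition~\ref{P1'}. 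Relatedly, the ``optimal growth and nondegeneracy $u\asymp u_0(d)$'' that you cite from \cite{DS2,DS3} comes with $\gamma$-dependent constants (the paper says this explicitly: the initial $a_0$ in the iteration depends on $\gamma$), so it does not pin down the scale of $u$ uniformly.

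The paper's proof bypasses this degeneration by a direct PDE iteration rather than compactness. It introduces Lemma~\ref{L010}, an interior improvement estimate of Harnack type: if $a_-\le u/u_0(x_n)\le a_+$ in $B_{1/2}(e_n)$ with $a_-\le 1\le a_+$, then both bounds contract toward $1$ by a fixed factor $c=c(n)$ in $B_{1/4}(e_n)$. Starting from the (possibly $\gamma$-dependent) $a_0$, one uses set-flatness from Proposition~\ref{P2} to write $u\le a_m u_0(x_n+4\eps)\le a_m(1+C\eps)u_0(x_n)$, applies Lemma~\ref{L010}, and iterates; the recursion $a_{m+1}=a_m(1+C\eps)(1-c)+c$ has fixed point $1+O(\eps)$ regardless of the initial $a_0$. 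This is what eliminates the $\gamma$-dependence and yields $C=C(n)$. If you want to repair your argument, you will need an ingredient of this type---some mechanism, uniform in $\gamma$, that drives $u/u_0(d)$ toward $1$---in place of the $\mathcal F$-compactness.
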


Proposition \ref{P2} and Lemma \ref{LF} imply that global minimizers $u$ of $\mathcal E_\gamma$, for $\gamma \to 2$, satisfy the flatness assumption $u \in \mathcal S(r,C\eps)$ for all $r$'s. Then the conclusion of Theorem \ref{TM} follows by applying Theorem \ref{TM} indefinitely. We are left with the proof of Lemma \ref{LF}.

Before proving Lemma \ref{LF}, we need the following preliminary result. Notice that, the multiples of the one dimensional solution 
$$ a u_0(x_n), \quad a >0,$$
are supersolutions for the Euler-Lagrange equation when $ a  \ge 1$ and subsolutions when $a \le 1$.
Next we show that if a solution $u$
is trapped between two such multiples, then the bounds can be improved in a linear fashion in the interior.
\begin{lem}\label{L010}
Assume that $\gamma \in [1,2)$ and $u$ satisfies
$$
\Delta u = W'(u) \quad \mbox{in} \quad B_{1/2}(e_n),
$$
and
$$a_-  \le \frac{ u}{u_0(x_n)} \le a_+, $$ for some 
$$0< a_- \le 1 \le a_+.$$ 
Then $$(1-c)a_- +c \le \frac{u}{u_0(x_n)} \le (1-c)a_+ +c \quad \mbox{in} \quad B_{1/4}(e_n),$$
for some constant $c>0$ depending only on $n$.

\end{lem}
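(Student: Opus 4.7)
The plan is to work with the ratio $v := u/u_0(x_n)$, for which the hypothesis reads $a_- \le v \le a_+$ in $B_{1/2}(e_n)$, and to show that $v-1$ decays towards the interior on each side by explicit barrier comparison. A direct computation using $u = v u_0$, the ODE $u_0'' = -(\gamma/2) u_0^{-\gamma-1}$, and the identity $\alpha(\gamma+2)=2$ converts the PDE for $u$ into
\[
\mathcal L v \;:=\; \Delta v + \frac{2\alpha}{x_n} v_n \;=\; \frac{K}{x_n^2}\,\phi(v), \qquad \phi(t) := t - t^{-\gamma-1},
\]
with $K = (\gamma/2) c_\alpha^{-\gamma-2}$ bounded above and below for $\gamma \in [1,2)$. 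The nonlinearity $\phi$ is strictly increasing with $\phi(1) = 0$ and $\phi'(t) = 1 + (\gamma+1) t^{-\gamma-2} \ge 1$, so the semilinear operator $v \mapsto \mathcal L v - \frac{K}{x_n^2}\phi(v)$ satisfies the comparison principle.

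Setting $w := v - 1 \in [a_- - 1, a_+ - 1]$ and applying the mean value theorem to $\phi$ gives the linearized equation $\mathcal L w = c(x) w$ with
\[
c(x) \;:=\; \frac{K}{x_n^2} \phi'(\zeta(x)) \;\ge\; \kappa_0 \;:=\; \frac{4K}{9} \;>\; 0,
\]
where $\kappa_0$ is universal because $x_n \le 3/2$ in $B_{1/2}(e_n)$ and $\phi' \ge 1$. I emphasize that no universal upper bound on $c(x)$ is available (it blows up as $a_- \to 0^+$), but only the lower bound will enter the argument. Kato's inequality applied to $\mathcal L$ (an elliptic operator with bounded drift, as $x_n \ge 1/2$) then yields $(\mathcal L - \kappa_0) w^+ \ge 0$, and analogously for $w^- := (-w)^+$.

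For the upper bound I construct a supersolution barrier. Fix once and for all a smooth cutoff $\chi \in C_c^\infty(B_{1/2}(e_n))$ with $\chi \equiv 1$ on $B_{1/4}(e_n)$, $0 \le \chi \le 1$, so $\|\mathcal L \chi\|_{L^\infty} \le C(n)$. With $M^+ := \sup_{B_{1/2}(e_n)} w^+ \le a_+ - 1$, set $\Phi^+ := M^+(1 - c_1 \chi)$. A direct computation gives
\[
(\mathcal L - \kappa_0) \Phi^+ \;=\; -M^+ \bigl[ \kappa_0 (1 - c_1 \chi) + c_1 \mathcal L \chi \bigr],
\]
which is nonpositive provided $\kappa_0(1 - c_1) \ge c_1 \|\mathcal L \chi\|_\infty$, i.e.\ $c_1 \le \kappa_0 / (\kappa_0 + \|\mathcal L \chi\|_\infty)$. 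I choose such a universal $c_1 \in (0,1)$; then $\Phi^+$ is a supersolution of $\mathcal L - \kappa_0 = 0$, and since $w^+ \le M^+ = \Phi^+$ on $\partial B_{1/2}(e_n)$ (where $\chi = 0$), the maximum principle for $\mathcal L - \kappa_0$ with $\kappa_0 > 0$ gives $w^+ \le \Phi^+$ throughout $B_{1/2}(e_n)$. Restricting to $B_{1/4}(e_n)$, where $\chi \equiv 1$, I obtain $w^+ \le (1 - c_1) M^+ \le (1 - c_1)(a_+ - 1)$, i.e.\ $v \le (1 - c_1) a_+ + c_1$. The symmetric argument applied to $w^-$ with barrier $M^- (1 - c_1 \chi)$ yields $v \ge (1 - c_1) a_- + c_1$.

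The main obstacle is that the linearized coefficient $c(x)$ lacks a uniform upper bound as $a_- \to 0^+$, so standard interior estimates for $\mathcal L - c$ do not give universal constants. The resolution is that only the universal \emph{lower} bound $c(x) \ge \kappa_0$ is needed: it validates the Kato step (with $\kappa_0$ in place of the possibly unbounded $c$) and drives the barrier construction, so the improvement constant $c_1$ depends only on $n$ and on the compact range $\gamma \in [1,2)$, as required.
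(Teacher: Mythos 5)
Your proof is correct, and it follows the same basic strategy as the paper (linearize, then push the bound inward via a barrier), but it is executed with a different and cleaner parameterization, and it handles a subtlety more carefully than the paper does.

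The paper's proof works with the difference $w=a\,u_0-u$, shows $\Delta w\le Cw - c(a-1)$ ``using the uniform Lipschitz bound of $W'$'', and then compares against quadratic barriers $\mu(1-C|x-x_0|^2)$. The difficulty you rightly flag is that the Lipschitz constant of $W'$ on the range of $u$ (which is only bounded below by $a_-\min u_0$) behaves like $a_-^{-\gamma-2}$, so $C$ is \emph{not} universal as $a_-\to 0^+$, and the size $\mu\sim (a-1)/C$ of the improvement would degenerate. The paper's write-up glosses over this; it can be salvaged (e.g.\ by running the barrier argument only in the region where $w$ is already small, where $u$ is comparable to $a_+u_0$ and the Lipschitz constant is universal), but your route avoids the issue entirely.

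Working with the ratio $v=u/u_0$ turns the equation into $\mathcal L v=\frac{K}{x_n^2}\phi(v)$ with $\phi(1)=0$ and $\phi'\ge 1$, and the crucial observation is that the comparison argument only needs the universal \emph{lower} bound $c(x)\ge\kappa_0>0$ for the linearized coefficient, never an upper bound. Your barrier $\Phi^\pm=M^\pm(1-c_1\chi)$ with a fixed cutoff $\chi$ then does the job once $c_1$ is chosen small depending only on $\kappa_0$ and $\|\mathcal L\chi\|_\infty$, both universal for $\gamma\in[1,2)$. A couple of minor remarks: (i) $\kappa_0=4K/9$ still depends on $\gamma$ through $K$, so you should replace it by the infimum of $K$ over $\gamma\in[1,2)$ (which is positive, as you note); (ii) the appeal to Kato's inequality is not really needed here — since $u,u_0>0$ in $B_{1/2}(e_n)$, $v$ is smooth, and one can simply look at an interior maximum of $w-\Phi^+$ (necessarily in $\{w>0\}$) and derive the same contradiction classically. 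Neither point affects correctness.
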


We require for $\gamma$ to be bounded away from 0 in order to have an inequality $$|W'(u_0)| \ge c \quad \mbox{ in} \quad  B_{1/2}(e_n),$$ with $c$ universal.

\begin{proof}
Let $v:=a \, u_0(x_n)$ and notice that in $B_{1/2}(e_n)$
\begin{align*}
\Delta v & = a W'(u_0)=a^{\gamma +2} W'(v) = W'(v) + \frac{a^{\gamma+2}-1}{a^{\gamma+1}}W'(u_0) \\
& \le W'(v) - c(a-1),
\end{align*}
with $c$ independent of $\gamma$.
Then, using the uniform Lipschitz bound of $W'$ we find that $w=v-u \ge 0$ satisfies
$$ \Delta w \le  C w - c(a-1) \quad \mbox{in} \quad B_{1/2}(e_n).$$
Now we can use comparison with explicit quadratic polynomials of the type
$$ \mu(1-C|x-x_0|^2) , \quad x_0 \in B_{1/4}(e_n)$$
and obtain
$$w \ge c'(a-1) \ge c'' (a-1)u_0 \quad \mbox{in} \quad B_{1/4}(e_n) ,$$
which gives the upper bound. 

The lower bound follows in a similar fashion.
\end{proof}

\

\noindent \textit{Proof of Lemma $\ref{LF}$.}
We show that 
\begin{equation}\label{am}
u(x) \le a_m u_0(d(x))
\end{equation} for successive constants $a_m$ that decrease to $1+ C '\eps$. 

In \cite{DS1} we showed that \eqref{am} holds for some $a_0$ large depending on $\gamma$. Suppose that \eqref{am} is satisfied for some constant $a_m$, and since the statement remains invariant under the rescaling of the equation, we may assume that $B_1(e_n) \subset \{u>0\}$ is tangent to the free boundary at $0$. By Proposition \ref{P2} we know that $\p \{u>0\} \cap B_4$ is trapped in the strip $\{|x_n| \le 4 \eps\}$. Then, \eqref{am} gives $$ u(x) \le a_m u_0(x_n + 4 \eps) \le a_m (1+C \eps) u_0(x_n) \quad \mbox{in} \quad B_{1/2}(e_n),$$
with $C$ independent of $\gamma$. We apply Lemma \ref{L010} to obtain $$u(e_n) \le a_{m+1} u_0(e_n), \quad a_{m+1}:=a_m(1+C\eps) (1-c) +c,$$
and, after rescaling, we find that the constant $a_m$ can be replaced by $a_{m+1}$, and the claim easily follows. 

The lower bound can be proved in a similar way.
\qed

\

We are now left with the proof of Theorem \ref{TM}, in the case $\gamma \to 0$. The only missing ingredient is the following compactness result 
showing that the limit of minimizers of $\mathcal E _\gamma$ with exponents tending to 0 is a minimizer for $\mathcal E_0$ as well. Recall that $\mathcal E_0$ is the Alt-Caffarelli functional:
$$\mathcal E_0 (u):= \int_\Omega (|\nabla u|^2+ \chi_{\{u>0\}})\; dx,$$ for which regularity in low dimension was established in \cite{AC, CJK, JS}.

\begin{prop}[Compactness for $\gamma \to 0$]\label{PCM}
Assume that $$\gamma_k \to 0, \quad \quad u_k \to \bar u \quad \mbox{in} \quad L^2(\Omega),$$ and $u_k$ are minimizers of $\mathcal E_{\gamma_k}$. Then $\bar u$ is a minimizer for $ \mathcal E_{0}$ in $\Omega$. 
\end{prop}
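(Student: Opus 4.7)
I would show that $\bar u$ minimizes $\mathcal E_0$ by a $\Gamma$-convergence style argument in three stages: upgrade the $L^2$ convergence to locally uniform convergence, prove a $\liminf$ inequality $\mathcal E_0(\bar u,U)\le \liminf_k \mathcal E_{\gamma_k}(u_k,U)$ on every $U\Subset\Omega$, and finally construct a recovery sequence for each Alt--Caffarelli competitor so that the minimality of the $u_k$ can be passed to the limit.

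\textbf{Improved convergence.} The uniform $C^{\alpha_k}$ H\"older bounds and the non-degeneracy of minimizers of $\mathcal E_{\gamma_k}$ from \cite{DS2} remain controlled as $\gamma_k\to 0$ (indeed $\alpha_k=2/(2+\gamma_k)\to 1$, so the exponent does not degenerate). Passing to a subsequence I would extract $u_k\to\bar u$ locally uniformly with $\bar u$ locally Lipschitz, together with the Hausdorff convergence $\p\{u_k>0\}\to\p\{\bar u>0\}$, which in turn forces $|\p\{\bar u>0\}|=0$ and $\chi_{\{u_k>0\}}\to\chi_{\{\bar u>0\}}$ in $L^1_{loc}(\Omega)$.

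\textbf{Liminf inequality and recovery.} The Dirichlet piece satisfies the liminf inequality by weak $H^1$ semicontinuity. For the potential, since $u_k$ is uniformly bounded on $U\Subset\Omega$, one has
$$u_k^{-\gamma_k}\chi_{\{u_k>0\}}\ge \|u_k\|_{L^\infty(U)}^{-\gamma_k}\chi_{\{u_k>0\}},$$
with the prefactor tending to $1$, so the previous $L^1$ convergence yields $\liminf_k \int_U u_k^{-\gamma_k}\chi_{\{u_k>0\}}\ge |\{\bar u>0\}\cap U|$. For the recovery step, given $w\ge 0$ with $w-\bar u\in H^1_0(U)$, pick $U\Subset U'\Subset\Omega$ and a cutoff $\eta$ equal to $1$ on $U$ and $0$ outside $U'$, and set $v_k:=\eta w+(1-\eta)u_k$; this equals $w$ on $U$ and $u_k$ on $\p U'$, so it competes with $u_k$ on $U'$. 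The key convergence on $U$ is
$$\int_U w^{-\gamma_k}\chi_{\{w>0\}}\,dx\longrightarrow |\{w>0\}\cap U|,$$
which, after reducing to Lipschitz $w$ with non-degenerate free boundary (by a standard density argument in the Alt--Caffarelli class), follows by splitting at a small level $\delta$: dominated convergence handles $\{w\ge\delta\}$, while a co-area estimate on $\{0<w<\delta\}$ produces a bound of the form $C\delta^{1-\gamma_k}/(1-\gamma_k)$, which vanishes as $\delta\to 0$ uniformly for small $\gamma_k$. On the annulus $U'\setminus U$, the uniform convergence of $v_k$ to $\bar u$ forces the energy there to approach $\int_{U'\setminus U}(|\nabla\bar u|^2+\chi_{\{\bar u>0\}})=O(|U'\setminus U|)$, which is made arbitrarily small by shrinking the annulus. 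Combining $\mathcal E_{\gamma_k}(u_k,U')\le \mathcal E_{\gamma_k}(v_k,U')$ with the liminf step and letting the annulus thickness tend to zero gives $\mathcal E_0(\bar u,U)\le \mathcal E_0(w,U)$, proving minimality.

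\textbf{Main obstacle.} The delicate point is the potential convergence for a general Alt--Caffarelli competitor: the integrand $w^{-\gamma_k}$ degenerates at $\{w=0\}$ and the convergence $w^{-\gamma_k}\chi_{\{w>0\}}\to\chi_{\{w>0\}}$ is only conditional as $\gamma_k\to 0$. This forces one first to approximate $w$ by Lipschitz competitors with non-degenerate positivity sets (ensuring $|\nabla w|\ge c>0$ near $\{w=0\}$), and then to run the co-area argument with quantitative bounds uniform in $\gamma_k$ small; once this is in place, the annular interpolation and the passage to the limit are routine.
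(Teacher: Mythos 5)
Your overall architecture (a $\Gamma$-convergence style argument, with a liminf inequality and a recovery sequence glued to $u_k$ through an annulus, followed by comparison via minimality of the $u_k$) matches the paper's strategy. However, your recovery step has two concrete gaps that the paper avoids by using different devices.

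First, to show $\int_U w^{-\gamma_k}\chi_{\{w>0\}}\to|\{w>0\}\cap U|$ you propose a ``standard density argument in the Alt--Caffarelli class'' reducing to Lipschitz $w$ with $|\nabla w|\geq c>0$ near $\{w=0\}$, followed by a co-area estimate on $\{0<w<\delta\}$. This reduction is not standard: it is not clear that a generic $H^1\cap L^\infty$ competitor for $\mathcal E_0$ can be approximated in energy by one with non-degenerate gradient on its free boundary (and without it, the co-area bound $C\delta^{1-\gamma_k}/(1-\gamma_k)$ has no justification). The paper sidesteps this entirely by using the truncated competitors $v_k:=(\bar v-t_k)^+$ together with an averaging argument: from
\[
\int_0^\eta\!\!\int_{B_1}\bigl[(\bar v-t)^+\bigr]^{-1/2}\,dx\,dt\le C\eta^{1/2}
\]
one selects a level $t$ with $\int_{B_1}\bigl[(\bar v-t)^+\bigr]^{-1/2}<\infty$; dominated convergence then gives the potential convergence for \emph{arbitrary} competitors without any regularity or non-degeneracy hypotheses on $w$.

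Second, your gluing $v_k:=\eta w+(1-\eta)u_k$ does not control the $\mathcal E_{\gamma_k}$ potential term on the annulus. In the transition zone $v_k$ can take arbitrarily small positive values (whenever one of $w,u_k$ vanishes and the other is small positive), and then $v_k^{-\gamma_k}$ is not uniformly bounded as $k\to\infty$: boundedness would require $\gamma_k|\log v_k|\to 0$, which is not ensured. Your claim that ``uniform convergence of $v_k$ to $\bar u$ forces the energy there to approach $\int_{U'\setminus U}(|\nabla\bar u|^2+\chi_{\{\bar u>0\}})$'' is therefore not justified. The paper instead invokes a dedicated interpolation lemma (Lemma~\ref{uv}, from \cite{DS1}) whose whole point is to produce a function agreeing with the two given ones on the inner and outer regions while increasing $\mathcal E_\gamma$ by only $o(1)$; that construction, not a naive convex cutoff combination, is what keeps the singular potential under control near the gluing. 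Your liminf step and the improved-convergence step are plausible, but as written the recovery step does not go through without repairing these two points (or replacing them with the truncation/averaging trick and the interpolation lemma as in the paper).
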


A version of this proposition for a fixed exponent $\gamma$ was proved in \cite{DS1}. It relies on a construction which interpolates between two functions which are $L^2$ close in an annulus, without increasing too much the $\mathcal E_\gamma$ energy.
\begin{lem}\label{uv} Let $u_k,v_k$ be sequences in $H^1(B_1)$ and $\delta>0$ small. Assume that $u_k-v_k \to 0$ in $L^2(B_{1-\delta/2} \setminus \bar B_{1-\delta}),$ as $k\to \infty,$ and that $u_k, v_k$ have uniformly (in $k$) bounded energy in $B_{1-\delta/2}$. Then, there exists $w_k \in H^1(B_1)$ with $$w_k:= \begin{cases}v_k \quad \text{in $B_{1-\delta}$}\\ u_k \quad \text{in $B_1 \setminus \bar B_{1-\delta/2}$}\end{cases}$$ such that 
 $$\mathcal E_\gamma (w_k, B_1) \leq \mathcal E_\gamma (u_k, B_{1-\delta/2}) + \mathcal E_\gamma (v_k, B_1 \setminus \bar B_{1-\delta}) + o(1),$$
 with $o(1) \to 0$ as $k \to \infty.$
\end{lem}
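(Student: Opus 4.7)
The plan is a sliding construction concentrated in a thin annular sub-shell of $A := B_{1-\delta/2} \setminus \bar B_{1-\delta}$. I would first partition $A$ into $N$ concentric sub-shells $S_1,\dots,S_N$ of equal radial width $\delta/(2N)$, and use the uniform energy bound $M$ together with pigeonhole to select, for each $k$, an index $j_k$ with
\[
\mathcal E_\gamma(u_k, S_{j_k}) + \mathcal E_\gamma(v_k, S_{j_k}) \le 2M/N.
\]
Let $\eta_k$ be a smooth radial cutoff equal to $1$ on the inner face of $S_{j_k}$ and $0$ on the outer face, with $\|\eta_k'\|_\infty \le CN/\delta$. Define $w_k := v_k$ on the inner component of $B_1 \setminus S_{j_k}$ (which contains $B_{1-\delta}$), $w_k := u_k$ on the outer component (which contains $B_1 \setminus \bar B_{1-\delta/2}$), and $w_k := \eta_k v_k + (1-\eta_k)u_k$ on $S_{j_k}$, giving a function in $H^1(B_1)$ with the prescribed boundary identities.

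Outside $S_{j_k}$, $\mathcal E_\gamma(w_k,\cdot)$ coincides with the corresponding $u_k$- or $v_k$-energy and is absorbed into the pieces of the right-hand side, so the task reduces to showing that $\mathcal E_\gamma(w_k, S_{j_k})$ is $o(1)$ as $k\to\infty$ once $N$ is taken large. For the Dirichlet part, expanding $\nabla w_k = (1-\eta_k)\nabla u_k + \eta_k\nabla v_k + (v_k-u_k)\eta_k'$ and applying Young's inequality gives
\[
\int_{S_{j_k}}\!|\nabla w_k|^2 \;\le\; (1+\varepsilon)(2M/N) + C_\varepsilon\|\eta_k'\|_\infty^2\|u_k-v_k\|_{L^2(A)}^2,
\]
whose last term vanishes as $k\to\infty$ for each fixed $N$ by the $L^2$ hypothesis. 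For the potential, on $\{u_k>0\}\cap\{v_k>0\}$ the convexity of $t\mapsto t^{-\gamma}$ gives $w_k^{-\gamma} \le \eta_k v_k^{-\gamma} + (1-\eta_k)u_k^{-\gamma}$, so this contribution is controlled by $\int_{S_{j_k}}(W_\gamma(u_k)+W_\gamma(v_k)) \le 2M/N$.

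The hard part will be the set $\{v_k=0,\,u_k>0\}$ (and its mirror), on which $w_k = (1-\eta_k)u_k$ and $w_k^{-\gamma} = (1-\eta_k)^{-\gamma}u_k^{-\gamma}$ becomes singular as $\eta_k\to 1$; the potential $W_\gamma$ is not convex at $0$, and naive interpolation generates small positive values from a vanishing input. To control it I would refine the construction via a secondary Fubini step inside $S_{j_k}$, localizing the support of $\eta_k'$ on a sub-interval $[r_*, r_*+\tau]\subset S_{j_k}$ on which both the slice integrals $\int_{\partial B_r}W_\gamma(u_k)\,d\mathcal H^{n-1}$ and $\int_{\partial B_r}(u_k-v_k)^2\,d\mathcal H^{n-1}$ are comparable to their radial averages. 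The explicit bound $\int_0^\tau(1-\eta_k(r))^{-\gamma}\,dr \le \tau/(1-\gamma)$, which stays bounded in the small-$\gamma$ regime of the application to Proposition~\ref{PCM}, together with the $L^2$-smallness of $u_k-v_k$ (which forces $u_k\to 0$ in $L^2$ on the vanishing set of $v_k$), is what keeps this bad contribution $o(1)$. Summing all pieces and letting $N\to\infty$ slowly with $k$ then yields the stated bound.
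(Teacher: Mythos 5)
The paper does not reprove this lemma; it defers entirely to [DS1], remarking only that the constants there are uniform for $\gamma$ bounded away from $2$. So your argument has to be judged on its own terms. The skeleton is sound and standard: pigeonhole a thin sub-annulus $S_{j_k}$ where both energies are $O(1/N)$, interpolate linearly there with a radial cutoff $\eta_k$, estimate the Dirichlet term by Young's inequality (absorbing the cross term into $\|\eta_k'\|_\infty^2\|u_k-v_k\|_{L^2}^2\to 0$), and use convexity of $t\mapsto t^{-\gamma}$ on $\{u_k>0\}\cap\{v_k>0\}$. You also correctly identify the crux: on the bad set $\{\min(u_k,v_k)=0,\ \max(u_k,v_k)>0\}$ the linear interpolant takes small positive values and $W_\gamma$ blows up.

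The remedy you sketch, however, does not close that gap. Two problems. First, the bound $\int_0^\tau(1-\eta_k(r))^{-\gamma}\,dr\le \tau/(1-\gamma)$ requires $\gamma<1$ and the integral diverges for $\gamma\ge 1$ whenever $\eta_k$ is continuous and reaches $1$. You flag this as acceptable "in the small-$\gamma$ regime," but the lemma carries no such restriction, and the paper explicitly asserts (citing the proof in [DS1]) that the statement holds for every $\gamma$ bounded away from $2$; your argument proves a strictly weaker version. Second, and independently of the range of $\gamma$, the "secondary Fubini step" does not deliver what the estimate needs. Pigeonhole over sub-intervals only controls the radial \emph{integral} $\int_{r_*}^{r_*+\tau}\phi(r)\,dr$ of the slice quantity $\phi(r)=\int_{\partial B_r}u_k^{-\gamma}\chi_{\{v_k=0,\,u_k>0\}}\,d\mathcal H^{n-1}$, not its pointwise values; the phrase "comparable to its radial average" glosses over this. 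The weight $(1-\eta_k(r))^{-\gamma}\sim((r-r_*)/\tau)^{-\gamma}$ concentrates mass at $r=r_*$, so $\int_{r_*}^{r_*+\tau}(1-\eta_k)^{-\gamma}\phi\,dr$ cannot be bounded using $\int\phi\,dr$ alone — if $\phi$ concentrates near $r_*$ the product integral is uncontrolled. Choosing $\eta_k'$ to be supported on the Chebyshev "good set" $G\subset S_{j_k}$ where $\phi$ is pointwise bounded does not repair this either: on the complementary bad set $\eta_k$ is locally constant, and on components near the inner face its value can be arbitrarily close to $1$, so $(1-\eta_k)^{-\gamma}$ is again out of control there. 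Finally, the appeal to "$u_k\to 0$ in $L^2$ on $\{v_k=0\}$" pulls in the wrong direction: a smaller $u_k$ makes $u_k^{-\gamma}$ \emph{larger}, so $L^2$-smallness of $u_k$ on the bad set gives no leverage on the potential. To make the interpolation work you need a genuinely different construction of $w_k$ across the bad set — for instance one that forces $w_k=0$ on $\{\min(u_k,v_k)=0\}$ in the transition layer while staying in $H^1$, or a truncation/dilation device rather than a straight convex combination — and that is precisely the part your sketch leaves open.
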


An inspection of the proof in \cite{DS1} shows that the result is valid for a sequence of exponents $\gamma_k$ that remain bounded away from 2. The reason is that the dependence of the constants on $\gamma$ is uniform as long as $\gamma$ is restricted to a compact set of $[0,2)$.
  
   We sketch the proof of Proposition \ref{PCM} in this more general setting of variable exponents.
    
\begin{proof}[Proof of Proposition $\ref{PCM}$]
Assume for simplicity that $\Omega=B_1$, and let $\bar v$ be a competitor for $\bar u$ in $B_1$ with $\bar v = \bar u$ in $B_1 \setminus \bar B_{1-\delta}$ for $\delta>0$ small, and with
 $$\mathcal E_{0}(\bar v, B_1) + \|v\|_{L^\infty(B_1)}< \infty.$$ 
First we construct a sequence of truncations 
$$v_k:=(\bar v-t_k)^+ \quad \mbox{ with $t_k \to 0$}$$ 
such that
\begin{equation}\label{vkb1}
\mathcal E_{\gamma_k} (v_k,B_1) \to \mathcal E_{0}(\bar v,B_1).
\end{equation}
Notice that for any $\eta>0$, there exists $t \in [0,\eta]$ such that
\begin{equation}\label{barv}
 \int_{B_1} ((\bar v-t)^+) ^{-\frac 12}< \infty,
 \end{equation}
which follows from 
$$\int_0^\eta \int_{B_1}[(\bar v-t)^+]^{-\frac 12} dx dt  \le C \eta^{1/2}.$$  
By Lebesgue dominated convergence theorem, \eqref{barv} implies
$$ \mathcal E_{\gamma} ((\bar v-t)^+,B_1) \to \mathcal E_{0} ((\bar v-t)^+,B_1)  \quad \mbox{as $\gamma \to 0$,} $$
and the claim \eqref{vkb1} follows since
$$\mathcal E_{0} ((\bar v-t)^+,B_1) \to \mathcal E_0(\bar v,B_1)  \quad \mbox{as $t \to 0$.}$$ 
The lower semicontinuity property for subdomains $D \subset B_1$,
$$\ \liminf \mathcal E_{\gamma_k} (v_k,D) \ge \mathcal E_{0}(\bar v,D),$$
implies that the convergence in \eqref{vkb1} holDS1 also for subdomains $D \subset B_1$.
 
 We use Lemma \ref{uv}, and call $w_k$ the interpolation of $v_k$ and $u_k$ such that $$w_k= v_k \quad \text{in $B_{1-\delta,}$} \quad w_k=u_k \quad \text{in $B_1 \setminus \bar B_{1-\delta/2}$}.$$ The hypotheses of Lemma \ref{uv} apply since 
 $$u_k-v_k \to 0 \quad\mbox{ in}  \quad L^2(B_1 \setminus B_{1-\delta}),$$ 
 and $\mathcal E_{\gamma_k}(u_k,B_{1-\delta/2})$ is uniformly bounded by Lemma 3.4 in \cite{DS1}.
 Then, by the minimality of $u_k$ and Lemma \ref{uv}, we get
$$\mathcal E_{\gamma_k}(u_k, B_1) \leq \mathcal E_{\gamma_k}(w_k, B_1) \leq \mathcal E_{\gamma_k}(v_k, B_{1-\delta/2}) + \mathcal E_{\gamma_k}(u_k, B_1 \setminus \bar B_{1-\delta}) + o(1),$$ with $o(1) \to 0$ as $k\to \infty.$ 
Subtract $\mathcal E_{\gamma_k}(u_k, B_1 \setminus \bar B_{1-\delta})$ from both sides, and obtain
$$\mathcal E_{\gamma_k}(u_k, B_{1-\delta}) \leq  \mathcal E_{\gamma_k}(v_k,  B_{1-\delta/2}) + o(1).$$
The lower semi-continuity of $\mathcal E$, and the convergence of the energies for the $v_k$'s gives
$$\mathcal E_0(\bar u, B_{1-\delta}) \leq  \mathcal E_0( \bar v, B_{1-\delta/2}).$$ We obtain the conclusion by letting $\delta \to 0.$
\end{proof}


\begin{thebibliography}{9999}

\bibitem[AlC]{AlC} Allen S. M., Cahn J. W., {\it Ground State Structures in Ordered Binary Alloys with Second Neighbor Interactions}. Acta Metall. 20,  (1972) 423--433.

\bibitem[AC]{AC} Alt H. W., Caffarelli L. A., {\it Existence and regularity for a minimum problem with
free boundary.} J. Reine Angew. Math 325, 105--144 (1981). 

\bibitem[ACF]{ACF}Alt H., Caffarelli L., Friedman A., {\it Variational problems with two phases and their free boundaries.} Trans. Amer. Math. Soc. 282 (1984), no. 2, 43--461.

\bibitem[AP]{AP} Alt H.W., Phillips D., {\it A free boundary problem for semilinear elliptic
equations.} J. Reine Angew. Math., 368 (1986), pp. 63--107. 


\bibitem[ACKS]{ACKS} Athanasopoulos I., L. A. Caffarelli L., Kenig C. and Salsa S., {\it An area-Dirichlet integral
minimization problem.} Comm. Pure Appl. Math. 54 (2001), no. 4, 479--499.


\bibitem[CH]{CH} Cahn J. W., Hilliard J. E., {\it Free Energy of a Nonuniform System. I. Interfacial Free Energy}. The Journal of Chemical Physics. AIP Publishing. 28 (1958): 258--267.


 \bibitem[CJK]{CJK} Caffarelli L.A., Jerison D., Kenig C. {\it Global energy minimizers for free boundary problems and full regularity in three dimension}, Contemp. Math.,
350, Amer. Math. Soc., Providence, RI (2004), 83--97.

\bibitem[CS]{CS} Caffarelli L., Salsa S. {\it A geometric approach to free boundary problems.} Graduate Studies in Mathematics, 68. American Mathematical Society, Providence, RI, 2005. x+270. 

\bibitem[CSi1]{CSi1} Caffarelli L.A., Silvestre L., {\it An extension problem for the fractional Laplacian,} Comm. Partial Differential Equations,  \textbf{32} (2007), 1245--1260.

\bibitem[CSi2]{CSi2} Caffarelli L.A., Silvestre L., {\it Regularity theory for fully nonlinear integro-differential equations,} Comm. Pure and Applied Math., Volume 62, Issue 5 (2009), 597--638

\bibitem[CV] {CV} Caffarelli L., Valdinoci E., {\it Uniform estimates and limiting arguments for non- local minimal surfaces}. Calc. Var. Partial Differential Equations 41 (2011), no. 1-2, 203--240.


\bibitem[DJ]{DJ} De Silva D., Jerison D.,  \emph{A singular energy minimizing free boundary}, J. Reine Angew. Math., 635 (2009), 1--22.

\bibitem[DS1]{DS1}  De Silva D., Savin O., {\it On certain degenerate one-phase free boundary problems,} SIAM J. Math. Anal. 53 (2021), no. 1, 
649--680.

\bibitem[DS2]{DS2} De Silva D., Savin O., {\it The Alt-Phillips functional for negative power}, arXiv:2203.07123, submitted.

\bibitem[DS3]{DS3} De Silva D., Savin O., {\it Uniform density estimates and $\Gamma$-convergence for the Alt-Phillips functional of negative powers}, arXiv:2205.08436, submitted. 




\bibitem[G]{G} Giusti E., {\it Minimal Surfaces and Functions of Bounded Variation,} Volume 80 of Monographs in Mathematics, Birkh\"auser Boston, 2013.
	 
 \bibitem[JS]{JS} Jerison D., Savin O., {\it Some remarks on stability of cones for the one-phase free boundary problem,} 
Geometric and Functional Analysis, Volume 25, Issue 4 (2015),1240--1257.

 


\bibitem[MM]{MM}  Modica L., Mortola S. {\it Un esempio di $\Gamma$-convergenza.} (Italian) Boll. Un. Mat. Ital. B (5) {\bf 14} (1977), no. 1, 285--299.

\bibitem[PSU]{PSU} Petrosyan A., Shahgholian H., Uraltseva N.,
{\it Regularity of free boundaries in obstacle-type problems.}
Graduate Studies in Mathematics, 136. American Mathematical Society, Providence, RI, 2012. x+221 pp. 




 
 
 \bibitem[S]{S} Savin O., {\it Regularity of flat level sets in phase transitions}, Annals of Mathematics, 169 (2009), 41--78.
\end{thebibliography}
\end{document}